\documentclass[spanish,12pt,a4paper]{article}
\usepackage[T1]{fontenc}
\usepackage[width=14cm, height=22cm]{geometry}
\usepackage{mathtools}
\usepackage{amssymb}
\usepackage{amsthm}

\usepackage{hyperref}
\usepackage{mathrsfs}
\usepackage{cancel}
\usepackage[usenames,dvipsnames,table]{xcolor}
\usepackage{amsmath,amsthm,amssymb}
\usepackage{amsfonts}
\usepackage{multicol}
\usepackage{multirow}
\usepackage{tabulary}
\usepackage{polynom}
\usepackage{moreverb} 
\usepackage{float}
\usepackage{graphicx}
\usepackage[all]{xy}
\usepackage{nicefrac}

\usepackage[shortlabels]{enumitem}

\usepackage{mathpazo}
\usepackage{domitian}

\newtheorem{defi}{{\it \textbf{Definition}}}[section]
\newtheorem{teo}[defi]{Theorem}
\newtheorem{prop}[defi]{Proposition}
\newtheorem{coro}[defi]{Corollary}
\newtheorem{lema}[defi]{Lemma}
\theoremstyle{definition}
\newtheorem{nota}[defi]{Remark}

\newcommand{\ac}{{\rm ($ \ast$C) }}
\newcommand{\acc}{{\rm ($ \ast$2C) }}
\newcommand{\accc}{{\rm ($ \ast$3C) }}

\DeclareMathOperator{\id}{Id}
\DeclareMathOperator{\dx}{dx}

\title{Transference of ergodic properties of operators via matrix actions }
\author{Daniel Santacreu\thanks{Departament d'Anàlisi Matemàtica. Facultat de Matemàtiques. Universitat de València. Av. Vicent Andrés Estellés, 19. Burjassot 46100. Spain. Daniel.Santacreu@uv.es} 
\and 
Pablo Sevilla-Peris\thanks{Insitut Universitari de Matemàtica Pura i Aplicada. Universitat Politècnica de València. cmno Vera sn. 46022 València. Spain. psevilla@mat.upv.es}
}
\date{}

\begin{document}
\maketitle

\begin{abstract}
For operators defined on locally convex spaces we define the notions of \textit{boundedness} and \textit{ergodicity} associated to an infinite matrix.
Given two matrices $ A$ and $ B$, we study when $ A$-bounded operators are $ B$-ergodic. Using this approach we obtain equivalent formulations for the classical notions of power boundedness, Cesàro boundedness and mean ergodicity.

\end{abstract}

\footnotetext{Both authors were supported by grant GVA CIAICO/2023/242 and grant PID2021-122126NB-C33 funded by MICIU/AEI/10.13039/501100011033 and by ERDF/EU}

\footnotetext{\textit{AMS classiciation 2020:} 47A35, 47B33, 47B48, }
\footnotetext{\textit{Keywords:} power bounded, mean ergodic, infinite matrix, Cesàro summation}

\section{Introduction}

The ergodic theory of operators deals with the problem of the behaviour of the iteration of the composition of an operator with itself. More precisely, if $ E$ is a Banach space (or, more generally, a locally convex Hausdorff space) and $ T: E \to E$ is a (continuous, linear) operator, we consider $ T^{0} = \id$, and $T^{n} = T^{n-1} \circ T$ for $ n \geq 1$. Then, the general question is to analyse the behaviour of $ (T^{n})_{n}$ as $ n$ grows, and to find conditions that ensure that the sequence converges in some sense. To this purpose, the Cesàro means of the operator are considered, defined as
\begin{equation*}
T_{[n]} = \frac{1}{n+1} \sum_{k=0}^{n} T^{k} \,.
\end{equation*}
and say that the operator is \textit{mean ergodic} if the sequence $ (T_{[n]})_{n}$ converges pointwise to some operator on $ E$. Also, the operator is \textit{power bounded} if $ \{ \Vert T^{n} \Vert \}_{n}$ is bounded (in the Banach case, if $ \{T^{n}\}_{n}$ is equicontinuous in the locally convex case). The classical theorems of Dunford-Schwartz \cite[Chapter~8]{dunfordschwartz_88} or Yosida \cite{yosida1938} give conditions so that a power bounded operator is mean ergodic. This seek of connections between `boundedness' and `convergence' properties is a constant along the theory, and there is a number of significant results in this aspect. \\

Finding alternative notions of convergence of sequences or series (such as Cesàro or Abel convergence) like, in a rather elementary level, for Fourier series, is by now a standard strategy. Toeplitz \cite{Toeplitz1911} was the first one to study systematically these methods of summation, using infinite matrices. This point of view is now somewhat classical (see \cite{cooke_65, hardy_49}). To our best knowledge, this approach through infinite matrices has not been considered to study ergodic properties of operators. In this note we propose this point of view, defining `boundedness' and `convergence' properties induced by infinite matrices, and finding conditions that allow to relate them, in the spirit of the classical results relating power and Cesàro boundedness with mean ergodicity.

\section{Matrix boundedness and ergodicity}

We fix now our general framework and establish some basic notation. In this note $ A$ will always denote an infinite lower triangular matrix That is, $A=(a_{nk})_{n,k=0}^\infty$ where $ a_{nk} \in \mathbb{C}$, with $a_{nk}=0$ whenever $i>j$. We say that such a matrix is a \textit{probability matrix} if, moreover, all elements are non-negative and the sum of each row is $ 1$. That is
\begin{enumerate}[(i)]
	\item $a_{nk}\geq 0$ for every $n,k \in \mathbb{N}_0$.
	
	\item $ \displaystyle \sum_{i=0}^{n}a_{ni}=1 $ for each $ n\in \mathbb{N}_0$.
\end{enumerate}

Also, $ \mathcal{T}$ will always denote a sequence sequence of continuous linear operators on some locally convex Hausdorff space $ E$. That is, $ \mathcal{T}=\{T_{n} \colon n \in \mathbb{N}_{0}\}$, where $ T_{n} : E \to E$ is linear and continuous for every $ n \in \mathbb{N}_{0}$. 
 We take a lower triangular matrix $ A$, and a sequence  $ \mathcal{T}$ as above and, for each $ n \in \mathbb{N}_{0}$ we consider
\begin{equation} \label{panadera}
	(A \mathcal{T})_{n} = \sum_{i=0}^{n} a_{ni} T_{i} \,.
\end{equation}
and define the \textit{$ A$-expected value} of $ \mathcal{T}$ as
\begin{equation*}
	A \mathcal{T}= \{ (A \mathcal{T})_{n} \colon n \in \mathbb{N}_{0} \}
\end{equation*}
With this notation, we say that $ \mathcal{T}$ is
\begin{itemize}
	\item $ A$-bounded if $A \mathcal{T}$ is equicontinuous, that is, for every continuous seminorm $ p$ there exists a continuous seminorm $ q$ so that
	\begin{equation} \label{abdd}
		p \Big( \sum_{i=0}^{n} a_{ni} T_{i}x \Big) \leq q(x)
	\end{equation}
	for every $ x \in E$ and all $ n \in \mathbb{N}_{0}$.
	
	\item absolutely $ A$-bounded if for every continuous seminorm $ p$ there exists a continuous seminorm $ q$ so that
	\begin{equation*}
		\sum_{i=0}^{n} p \big(a_{ni} T_{i}x \big) \leq q(x)
	\end{equation*}
	for every $ x \in E$ and all $ n \in \mathbb{N}_{0}$.
	
	\item  $ A$-ergodic if there is some operator $P$ such that $ (A \mathcal{T})_{n}x$ converges to $Px$ for every $ x \in E$.
	
	\item uniformly $ A$-ergodic if $ A \mathcal{T}$ converges uniformly on bounded sets to some operator.
	
	\item $ A$-null if $ (A \mathcal{T})_{n}x \to 0$ for every $ x \in E$.
\end{itemize}

Observe that if $\mathcal{T}$ is $A$-null then it is $A$-ergodic. Also if $E$ is barrelled, then every $A$-ergodic $\mathcal{T}$ is $A$-bounded. Let us also point out that, if $ E$ is a Banach space, then $ \mathcal{T}$ is $ A$-bounded if and only if the sequence $ \big(\Vert (A \mathcal{T})_{n} \Vert \big)_{n}$ is bounded.

\begin{nota} \label{urbanlights}
	Given a single operator $ T$, we consider the sequence of its powers, $ (T^{n})_{n}$, and we say that the operator $ T$ is 
	$ A$-bounded, 
	$ A$-ergodic or $ A$-null if the sequence $ (T^{n})_{n}$ is so. In this case we will write $ (AT)_{n}$ in \eqref{panadera}.
\end{nota}

Let us rewrite some classical ergodic properties which, in some sense are the starting point of our research, in these matrix terms. 

\begin{nota} \label{identitat}
	Let $ I=(\delta_{nk})_{n,k=0}^\infty$ be the identity matrix. Then, being $ I$-bounded is exactly being power bounded. Also, an operator $T$ is $I$-ergodic if and only if $ (T^{n})_{n}$ is pointwise convergent, and uniformly  $I$-ergodic if and only if $ (T^{n})_{n}$ converges uniformly on bounded sets.
\end{nota}

\begin{nota} \label{cesaro}
	Let us recall (see e.g. \cite{boos}), that the Cesàro mean can also be formulated in terms of matrices. This of course applies also in our setting, and (absolute) Cesàro boundedness and (uniform) mean ergodicity of operators can be described with matrices. To do so, just consider the matrix $ M=(m_{nk})_{n,k=0}^\infty$ with $m_{nk}=\frac{1}{n+1}$ when $n\leq k$ and $0$ elsewhere, that is
	\begin{equation} \label{sagrario}
		M= 
		\begin{pmatrix}
			1 & 0&0 &0 & \ldots \\
			\nicefrac{1}{2} &  \nicefrac{1}{2}  & 0 & 0 & \ldots \\
			\nicefrac{1}{3} & \nicefrac{1}{3} &  \nicefrac{1}{3}  &0 & \ldots \\
			\nicefrac{1}{4} & \nicefrac{1}{4} &  \nicefrac{1}{4}  & \nicefrac{1}{4}  & \ldots \\
			\vdots & \vdots & \vdots & \vdots & \ddots
		\end{pmatrix}
	\end{equation}
	This matrix does the job, and being (absolutely) Cesàro bounded and being (absolutely) $M$-bounded are equivalent, and (uniform) mean ergodicity is equivalent to (uniform) $ M$-ergodicity.
\end{nota}

\section{Pairs of matrices}

Our first aim is to see under which conditions these properties transfer from one matrix to another. To be more specific, given two lower triangular matrices $ A$ and $ B$, we are interested in seeing what sort of relationship between them gives that every $ A$-bounded/ergodic/null sequence is $ B$-bounded/er\-godic/null. The general idea here is, given two matrices $ A$ and $ B$, try to find some matrix $ C$  with $ CA=B$, and give conditions on $ C$ so that $ A$-properties transfer to $ B$-properties. 

\begin{nota} \label{fox240}
	Let us in first place note that $A \mathcal{T}= \{ (A \mathcal{T})_{n} \colon n \in \mathbb{N}_0 \}$ is again a sequence of continuous linear operators. Then, if $ C$ is another lower triangular matrix, straightforward computations show that $ \big( C(A \mathcal{T}) \big)_{n}  = \big((CA)  \mathcal{T}) \big)_{n}$ for every $ n$, where $ CA$ is the usual matrix product. Observe also that, if both $ A$ and $ C $ are probability matrices, then so also is $ CA$.
\end{nota}

Suppose that $ A$ and $ B$ are two lower triangular matrices for which there is some lower triangular matrix $ C=(c_{ij})$ such that $ CA=B$. Then we say that the pair $ (A,B)$ satisfies
\begin{itemize}
	\item property \ac if the sums of the rows of $ C$ are bounded, that is
	\begin{equation} \label{astc}
		K_{C}=\sup_{n \in \mathbb{N}_0} \sum_{i=0}^{\infty} \vert c_{ni} \vert < \infty
	\end{equation}
	
	\item property \acc if it satisfies \ac and, moreover, the columns of $ C$ tend to $ 0$, that is
	\begin{equation} \label{ast2c}
		\lim_{n} c_{ni} = 0\, \text{ for each fixed } i \in \mathbb{N}_0
	\end{equation}
	
	\item property \accc if the sums of the rows of $ C$ tend to 0, that is
	\begin{equation} \label{ast3c}
		\lim_{n} \sum_{i=0}^{\infty} \vert c_{ni} \vert =0 \,.
	\end{equation}
\end{itemize}

These relations between matrices allow to transfer the properties of sequences of operators from one another

\begin{prop} \label{schubert}
	Let $ (A,B)$ be a pair of lower triangular matrices. If the pair satisfies
	\begin{enumerate}[(i)]
		\item \label{madruga} \ac\!, then every $ A$-bounded $ \mathcal{T}$ is $ B$-bounded.
		
		\item  \label{null to null} \acc\!, then every $ A$-null $ \mathcal{T}$ is $ B$-null.
		
		\item  \label{bdd to null} \accc\!, then every $ A$-bounded $ \mathcal{T}$ is $ B$-null.
	\end{enumerate}
\end{prop}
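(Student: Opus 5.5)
The whole argument rests on Remark~\ref{fox240}. Since $CA=B$, for every $n$ we have
\[
(B\mathcal{T})_n=\big(C(A\mathcal{T})\big)_n=\sum_{i=0}^{n} c_{ni}\,(A\mathcal{T})_i .
\]
Everything then reduces to estimating a finite linear combination of the operators $S_i:=(A\mathcal{T})_i$, with coefficients taken from the rows of $C$. Since $C$ is lower triangular, all these sums are finite, so there are no convergence issues.

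\emph{Part (i).} Fix a continuous seminorm $p$. By $A$-boundedness there is a continuous seminorm $q$ with $p(S_ix)\le q(x)$ for all $i$ and all $x$. The triangle inequality then gives
\[
p\big((B\mathcal{T})_nx\big)\le \sum_{i=0}^n |c_{ni}|\,p(S_ix)\le K_C\, q(x).
\]
Since $K_C q$ is again a continuous seminorm, $B\mathcal{T}$ is equicontinuous.

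\emph{Part (iii).} Take the same $p$ and $q$ as in (i). The estimate becomes
\[
p\big((B\mathcal{T})_nx\big)\le \Big(\sum_i |c_{ni}|\Big) q(x),
\]
and the right-hand side tends to $0$ by \eqref{ast3c}. So $(B\mathcal{T})_nx\to 0$ for every $x$, because the topology is generated by the continuous seminorms.

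\emph{Part (ii).} This is the only step needing care; it is the classical Silverman--Toeplitz argument. Fix $x$, $p$ and $\varepsilon>0$. Since $\mathcal{T}$ is $A$-null, $p(S_ix)\to 0$, so we may choose $N$ with $p(S_ix)<\varepsilon$ for all $i>N$. Split the sum at $N$:
\[
p\big((B\mathcal{T})_nx\big)\le \sum_{i=0}^{N}|c_{ni}|\,p(S_ix)+\sum_{i=N+1}^{n}|c_{ni}|\,p(S_ix).
\]
The tail is at most $K_C\varepsilon$ by \eqref{astc}. The head is a fixed finite sum of the form $\max_{i\le N}p(S_ix)\sum_{i\le N}|c_{ni}|$, and it tends to $0$ as $n\to\infty$ by the column condition \eqref{ast2c}. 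Hence $\limsup_n p((B\mathcal{T})_nx)\le K_C\varepsilon$, and letting $\varepsilon\to0$ finishes the proof.

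The main point to watch is that (ii) uses the pointwise hypothesis at a fixed $x$, so no equicontinuity is available or needed there. This is why both parts of \acc are required: the row-sum bound controls the tail, and column convergence kills the finitely many initial terms.
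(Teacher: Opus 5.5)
Your proposal is correct and follows essentially the same route as the paper: you write $(B\mathcal{T})_n=\sum_{j\le n} c_{nj}(A\mathcal{T})_j$ via Remark~\ref{fox240} and use the triangle inequality for (i) and (iii). For (ii) you split the sum at a fixed index, so that the row-sum bound \eqref{astc} controls the tail and the column condition \eqref{ast2c} kills the finitely many initial terms; phrasing this with $\limsup$ is just a tidier version of the paper's explicit choice of $n_0$.
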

\begin{proof}
	We fix a matrix $ C$ satisfying $ CA=B$, a sequence of continuous linear operators $ \mathcal{T}=(T_{n})_{n}$, and $ p$ a seminorm on $ E$. For \ref{madruga} we assume that $ \mathcal{T}$ is $ A$-bounded, and that $ C$ satisfies \eqref{astc}. Then we can find a a seminorm $ q$ satisfying \eqref{abdd} and, taking Remark~\ref{fox240} into account, we get
	\begin{multline*}
		p\big((B \mathcal{T} )_{n} x \big) = p\big((C(A \mathcal{T} ))_{n} x \big) \\
		= p \Big( \sum_{j=1}^{n} c_{nj}\sum_{i=1}^{j} a_{ij} T_{j} x \Big) 
		\leq \sum_{j=1}^{n} \vert c_{nj} \vert  \, \, p \Big( \sum_{i=1}^{j} a_{ij} T_{j} x \Big)
		\leq K_{C} q(x) \,,
	\end{multline*}
	for every $ x$ and $ n$. \\
	To prove \ref{null to null} we assume that $ \mathcal{T}$ is $ A$-null, and $ C$ satisfies \eqref{astc} and \eqref{ast2c}. Then, given $ x \in E$ we can find $ N \in \mathbb{N}_0$ so that
	\begin{equation*}
		p \Big(\sum_{i=1}^{n} a_{ni}  T_{i}x\Big) < \frac{\varepsilon}{2 K_C}
	\end{equation*}
	for every $n\geq N$. Taking into account that $ a_{ji}=0$ whenever $ i >j$,  we have
	\begin{equation*}
		(B\mathcal{T})_{n} x 
		= \sum_{i=1}^{n} \sum_{j=1}^{n} c_{nj} a_{ji} T_{i}x 
		=  \sum_{j=1}^{n} c_{nj}  \sum_{i=1}^{n}a_{ji} T_{i}x
		=  \sum_{j=1}^{n} c_{nj}  \sum_{i=1}^{j}a_{ji} T_{i}x \,.
	\end{equation*}
	If $ n \geq N$ then
	\begin{multline*}
		p \big(  (B\mathcal{T})_{n} x \big) 
		\leq p\Big(\sum_{j=1}^{N} c_{nj}  \sum_{i=1}^{j}a_{ji}T_{i}x\Big)
		+p\Big(\sum_{j=N+1}^{n} c_{nj}  \sum_{i=1}^{j}a_{ji} T_{i}x\Big) \\
		\leq \max_{1 \leq j \leq N} p\big( (A\mathcal{T})_{j} x\big)\sum_{j=1}^{N} \vert c_{nj} \vert
		+ \frac{\varepsilon}{2 K_{C}}\sum_{j=N+1}^{n} \vert c_{nj} \vert
		\leq K_{2} \sum_{j=1}^{N} \vert c_{nj} \vert + \frac{\varepsilon}{2} \,.
	\end{multline*}
	Now, for each $ j=1, \ldots ,N$, using \eqref{ast2c}, choose $ n_{j} \geq N$ so that $ \vert c_{nj}\vert \leq \frac{\varepsilon}{2 N K_{2}}$. Taking $ n_{0} = \max \{n_{1}, \ldots, n_{N}\}$ we have
	\begin{equation*}
		p \big(  (B\mathcal{T})_{n} x \big) < \varepsilon\,,
	\end{equation*}
	for every $ n \geq n_0$. This gives the claim.\\
	Finally, to prove that \ref{bdd to null} holds, we assume again that $ \mathcal{T}$ is $ A$-bounded, and take $p$ so that
	\begin{equation*} 
		p \Big( \sum_{i=1}^{n} a_{ni} T_{i}x \Big) \leq q(x)
	\end{equation*}
	for every $ x \in E$ and all $ n \in \mathbb{N}_{0}$. Therefore by \accc we have
	\begin{multline*}
		\lim_n p \big(  (B\mathcal{T})_{n} x \big) 
		= \lim_n p\Big(\sum_{j=1}^{n} c_{nj}  \sum_{i=1}^{j}a_{ji} T_{i}x \Big) \\
		\leq \lim_n \Big[\sum_{j=1}^{n} \vert c_{nj} \vert \, p\big( \sum_{i=1}^{j}a_{ji} T_{i}x\big)\Big]
		\leq\, q(x) \lim_n \sum_{j=1}^{n} \vert c_{nj} \vert  =0 \,,
	\end{multline*}
	for every $x\in E$, which gives the claim.
\end{proof}

We can look at these properties of matrices in terms of operators acting on $\ell_\infty$ and $c_0$. Note that, if $ C=(c_{nk})_{n,k=0}^{\infty}$ is a (lower triangular) matrix, we can define a linear mapping on $ \mathbb{C}^{\mathbb{N}_0}$ by doing
\begin{equation} \label{geminiani}
	Cx = \bigg( \sum_{i=0}^{\infty} c_{ni}x_{i} \bigg)_{n =0}^{\infty}
\end{equation}
for each sequence $ x=(x_{i})_{i=0}^{\infty}$ (note that each sum in \eqref{geminiani} is finite). Properties \ac\!, \acc and \accc can be reformulated in terms of properties of this operator. For \ac and \acc this was already done in \cite[Theorem~4.51-C]{Taylor} (see also \cite[Lemma~1]{Reade} for \acc\!). For condition \accc one can check \cite[p.~4 (21.1)]{Stieglitz}. We collect these in the following result and, for the sake of completeness, include a proof more akin to our notation and point of view.

\begin{prop}\label{cpeb}
	Let $ A,B$ and $ C$ be lower triangular matrices so that $ CA=B$. Then, the pair $ (A,B)$ satisfies 
	\begin{enumerate}[(i)]
		\item \label{cpeb1} \ac if and only if $ C: \ell_{\infty} \to \ell_{\infty}$ is continuous.
		
		\item  \label{cpeb2} \acc  if and only if $ C: c_{0} \to c_{0}$ is continuous.
		
		\item  \label{cpeb3} \accc  if and only if $ C: \ell_{\infty} \to c_0$ is compact.
	\end{enumerate}
\end{prop}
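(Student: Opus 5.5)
I will prove the three equivalences separately. Throughout I use that $C$ is lower triangular, so each row $(c_{ni})_i$ has finitely many nonzero entries. Hence $Cx$ is defined for every sequence $x$, and each coordinate functional $x\mapsto (Cx)_n$ is continuous on $\ell_\infty$ and on $c_0$ with norm exactly $\sum_i |c_{ni}|$. For the norm, test on the finitely supported vector $x_i=\overline{\operatorname{sgn} c_{ni}}$ for $i\le n$ and $x_i=0$ otherwise; this $x$ lies in $c_0$ and has norm at most $1$.

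\emph{Part \ref{cpeb1}.} If \ac holds, then $|(Cx)_n|\le \sum_i |c_{ni}|\,\|x\|_\infty\le K_C\|x\|_\infty$, so $C$ maps $\ell_\infty$ into $\ell_\infty$ with $\|C\|\le K_C$. Conversely, suppose $C:\ell_\infty\to\ell_\infty$ is continuous. For each $n$, the finitely supported sign vector above has norm at most $1$ and satisfies $(Cx)_n=\sum_i|c_{ni}|$. Hence $\sum_i |c_{ni}|\le \|C\|$ for every $n$, which is \eqref{astc}.

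\emph{Part \ref{cpeb2}.} Suppose \acc holds. Then $C$ is bounded on $c_0$ in sup norm by the estimate in part (i), and it only remains to check that $Cx\in c_0$ for $x\in c_0$. Given $\varepsilon>0$, choose $N$ with $|x_i|<\varepsilon$ for $i>N$. Split
\[
(Cx)_n=\sum_{i\le N}c_{ni}x_i+\sum_{i>N}c_{ni}x_i .
\]
The first sum tends to $0$ as $n\to\infty$ by \eqref{ast2c}, since it involves finitely many columns. The second sum is bounded by $K_C\varepsilon$. This is the same splitting as in the proof of Proposition~\ref{schubert}(ii).

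Conversely, suppose $C:c_0\to c_0$ is continuous. Applying $C$ to the unit vector $e_i$ gives the $i$-th column $(c_{ni})_n$, which must lie in $c_0$; this is \eqref{ast2c}. For \eqref{astc}, the finitely supported sign vectors lie in $c_0$, so the norm computation from part (i) gives $\sum_i|c_{ni}|\le\|C\|$.

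\emph{Part \ref{cpeb3}.} This is the part I expect to need the most care, specifically the compactness criterion. Suppose \accc holds. Let $C_N$ be the operator that keeps the first $N$ rows of $C$ and sets the remaining rows to zero. Each $C_N$ has finite rank and maps $\ell_\infty$ into $c_0$. Moreover
\[
\|C-C_N\|_{\ell_\infty\to\ell_\infty}=\sup_{n\ge N}\sum_i|c_{ni}|\to 0 .
\]
So $C$ is a norm limit of finite rank operators into the closed subspace $c_0$. Therefore $C$ is compact and takes values in $c_0$.

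Conversely, suppose $C:\ell_\infty\to c_0$ is compact. Then the image $K=C(B_{\ell_\infty})$ of the unit ball is relatively compact in $c_0$. I will use the standard characterization of relatively compact sets in $c_0$: their tails are uniformly small, that is, $\sup_{y\in K}\sup_{n\ge N}|y_n|\to 0$ as $N\to\infty$. I would include a short proof of this via a finite $\varepsilon$-net, since each element of the net lies in $c_0$. Now, for each $n$, take the sign vector $x^{(n)}\in B_{\ell_\infty}$ from above. Then
\[
\sum_i|c_{ni}|=(Cx^{(n)})_n\le \sup_{y\in K}|y_n| ,
\]
and the right-hand side tends to $0$ as $n\to\infty$. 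This gives \eqref{ast3c}.
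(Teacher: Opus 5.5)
Your proof is correct. Parts (i) and (ii) follow the paper's argument essentially verbatim: the same finitely supported sign vectors $z(n)$, the same head/tail splitting, and the same test on $e_i$. The converse in (iii) is also the paper's argument. Your uniformly-small-tails property just says that $a_n=\sup_{y\in C(B_{\ell_\infty})}|y_n|$ tends to $0$, which is the necessity half of the criterion the paper quotes from Diestel; you prove it directly with an $\varepsilon$-net instead of citing it.

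The genuine difference is the forward direction of (iii).
\begin{itemize}
\item The paper applies the sufficiency half of that criterion (a bounded subset of $c_0$ dominated coordinatewise by some $a\in c_0$ is relatively compact) with $a_n=\sum_i|c_{ni}|$.
\item You approximate $C$ in operator norm by its row truncations $C_N$ and use that norm limits of finite-rank operators are compact.
\end{itemize}
Your route is self-contained. It needs only the row-sum norm formula from (i), the closedness of $c_0$ in $\ell_\infty$, and the norm-closedness of the compact operators. The paper's route is a one-line check once the criterion is granted, and it uses a single characterization of compactness in $c_0$ for both implications.

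The two are closer than they look, since the sufficiency half of the $c_0$ criterion is itself usually proved by exactly your truncation argument.
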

\begin{proof} 	
	\ref{cpeb1} Suppose that $ (A,B)$ satisfies \ac, and take $x\in \ell_{\infty}$. Then
	\begin{equation} \label{gustafsson}
		\Vert Cx\Vert_{\infty}=\sup_n \left| \sum_{i=0}^{\infty} c_{ni}x_{i}  \right|\leq \Vert x\Vert_{\infty} \sum_{i=0}^{\infty} |c_{ni}|\leq K_{C}\Vert x\Vert_{\infty} \,,
	\end{equation}
	which gives the claim. Suppose now that the operator $ C$ is continuous, and define a matrix  $D=(d_{nk})_{n,k=0}^\infty$ with $d_{nk}=\tfrac{\overline{c_{nk}}}{\vert c_{nk}\vert }$ if $c_{nk}\neq 0$ and $0$ otherwise, now for each $n$ consider
	\begin{equation}\label{zn}
		z(n) = \big(d_{n0}, d_{n1}, d_{n2}, \ldots)  \,.
	\end{equation}
	Then $ \Vert Cz(n)\Vert_{\infty}\leq \Vert C \Vert \cdot \Vert z(n) \Vert_{\infty} = \Vert C \Vert $ for every $n$, and
	\begin{equation} \label{rienzi}
		\sum_{i=0}^\infty |c_{ni}| = \left| \sum_{i=0}^\infty c_{ni}z(n)_i \right|= |(Cz(n))_n|\leq \Vert C \Vert \,, 
	\end{equation}
	for every $n$, and the pair $ (A,B)$ satisfies \ac\!.\\
	\ref{cpeb2} Assume that the pair satisfies \acc and let us see that the operator $ C: c_{0} \to c_{0}$ is continuous. Taking \ref{cpeb1}, it is only left to see that $ C(c_{0}) \subseteq c_{0}$. Take, then $x\in c_0$ and fix $\varepsilon>0$. We can find $i_0$ such that $|x_i|<\frac{\varepsilon}{2 K_{C}}$ for every $i>i_0$. Then
	\[ 
	\vert(Cx)_n\vert=\left| \sum_{i=0}^\infty c_{ni}x_i \right|\leq \sum_{i=0}^{i_0} |c_{ni}||x_i| + \sum_{i=i_0+1}^\infty |c_{ni}||x_i|\leq \Vert x\Vert_{\infty} \sum_{i=0}^{i_0} |c_{ni}| + \frac{\varepsilon}{2}\,.  
	\]
	Proceeding as in the proof of Proposition~\ref{schubert}--\ref{null to null}, we get that  $Cx\in c_0$. Now, if the operator $ C: c_{0} \to c_{0}$ is continuous, note that the sequences  defined in \eqref{zn} are in $c_0$, and the argument in \eqref{rienzi} shows that \eqref{astc} holds. We finish the argument by fixing an index $i\in\mathbb{N}_0$. For the $i$-th element of the canonical basis $e_i\in c_0$ we have
	\[ 
	0=\lim_{n\to \infty} (Ce_i)_n= \lim_{n\to \infty} c_{ni}\,, 
	\]
	and \acc is satisfied.
	
	Before we get in the proof of \ref{cpeb3}, recall that a bounded $K\subset c_{0}$ is compact if and only if there exists a  sequence of positive numbers $a=(a_n)_n \in c_0$ such that $\sup_{x\in K} |x_n|\leq a_n$, for every $n\in \mathbb{N}_0$ (see \cite[Chapter~II, p.~15]{Diestel}). Now, let $B_{\ell_{\infty}}$ denote the closed unit ball in $\ell_\infty$ . Observe that
	\[ 
	\sup_{x\in B_{\ell_{\infty}}}|(Cx)_n|\leq \sup_{x\in B_{\ell_{\infty}}} \sum_{i=0}^\infty \left|c_{ni}x_i  \right| 
	\leq \sup_{x\in B_{\ell_{\infty}}}\Vert x\Vert_{\infty} \sum_{i=0}^\infty \left|c_{ni}  \right|= \sum_{i=0}^\infty \left|c_{ni}  \right|\,. 
	\]
	Taking $a_n=\sum_{i=0}^\infty \left|c_{ni}  \right|$, \eqref{ast3c} yields  the claim. 
	Conversely, if $ C: \ell_{\infty} \to c_0$ is compact, we take the sequences  $z(n)$ defined in \eqref{zn}, and there is $a\in c_0$ such that $\sup_{k\in\mathbb{N}_0}|(Cz(k))_n| \leq a_n$ for every $n$. In particular 
	\[ 
	\lim_{n\to \infty} \sum_{i=0}^\infty |c_{ni}|= \lim_{n\to \infty} |(Cz(n))_n|\leq \lim_{n\to \infty} a_n =0\,, 
	\]
	and \accc follows.
\end{proof}

Observe that the matrix $M$ acting on $\mathbb{C}^{\mathbb{N}_0}$ (see \eqref{sagrario}) coincides with the Cesàro operator. This gives that the properties of the pair $(\id,M)$ are connected to the properties of the Cesàro operator. The Cesàro operator is continuous when acting on $\ell_\infty$ or $c_0$ but it is not compact (see \cite{AlbaneseBonetRicker15}).\\

Now that we have described the matrix properties in terms of operators, and bearing in mind that the product of matrices translates into composition of operators, we immediately have the following.

\begin{coro}
	Let $ A$, $B$ and $D$ be lower triangular matrices. Then the following are satisfied
	\begin{enumerate}[(i)]
		\item If the pairs $ (A,B)$ and $(B,D)$ satisfy \ac then the pair $(A,D)$ also satisfies \ac\!.
		\item If the pairs $ (A,B)$ and $(B,D)$ satisfy \acc then the pair $(A,D)$ also satisfies \acc\!.
		\item If the pair $(A,B)$ satisfies \ac or  \accc  and the pair $(B,D)$ satisfies \accc then the pair $(A,D)$ also satisfies \accc\!. 
		\item If the pair $(A,B)$ satisfies \accc and the pair $(B,D)$ satisfies \acc then, the pair $(A,D)$ also satisfies \accc\!.
	\end{enumerate}
\end{coro}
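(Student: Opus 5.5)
We would use Proposition~\ref{cpeb} to turn each matrix condition into a property of an operator on sequence spaces, and then note that composition behaves well. Suppose $C_1A=B$ and $C_2B=D$ with $C_1,C_2$ lower triangular, witnessing the hypotheses on $(A,B)$ and $(B,D)$. Then $C:=C_2C_1$ is lower triangular. By associativity of matrix multiplication (each entry is a finite sum because of triangularity), $CA=C_2(C_1A)=C_2B=D$, so $C$ witnesses the pair $(A,D)$. On $\mathbb{C}^{\mathbb{N}_0}$ the map induced by $C$ in \eqref{geminiani} is the composition of the maps induced by $C_2$ and $C_1$. Again this holds because all sums are finite, so $(C_2(C_1x))_n=\sum_j c^{(2)}_{nj}\sum_i c^{(1)}_{ji}x_i=\sum_i (C_2C_1)_{ni}x_i$.

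The four items then follow from standard facts about compositions. For (i), both $C_1,C_2$ are continuous on $\ell_\infty$ by Proposition~\ref{cpeb}\ref{cpeb1}, so $C$ is continuous and the pair $(A,D)$ satisfies \ac. For (ii), the same argument works with $c_0$ and Proposition~\ref{cpeb}\ref{cpeb2}.

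For (iii), $C_2\colon\ell_\infty\to c_0$ is compact. If $(A,B)$ satisfies \ac, then $C_1\colon\ell_\infty\to\ell_\infty$ is bounded, so $C_2C_1\colon\ell_\infty\to c_0$ is compact. If $(A,B)$ satisfies \accc, then $C_1\colon\ell_\infty\to c_0\subset\ell_\infty$ is compact, hence bounded into $\ell_\infty$, and we conclude in the same way.

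For (iv), $C_1\colon\ell_\infty\to c_0$ is compact and $C_2\colon c_0\to c_0$ is bounded, so $C_2C_1\colon\ell_\infty\to c_0$ is compact. In each case Proposition~\ref{cpeb}\ref{cpeb3} gives \accc for $(A,D)$.

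The only real subtlety is the identification of the operator of $C_2C_1$ with the composition, together with the associativity $C_2(C_1A)=(C_2C_1)A$. Both are immediate from lower triangularity, since every sum involved is finite. This is also what makes the definitions of \ac, \acc and \accc depend only on the chosen $C$, so taking the product $C_2C_1$ as the witness is legitimate.
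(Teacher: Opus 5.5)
Your proof is correct and follows the paper's approach. The paper gives no separate proof: it states the corollary as immediate from Proposition~\ref{cpeb} and the fact that the witness $C_2C_1$ (with $C_2C_1A=D$) acts on sequences as the composition of the operators induced by $C_2$ and $C_1$. Your case analysis (composing bounded operators, and a compact operator with a bounded one on either side) supplies exactly the details the paper leaves implicit.
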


If the matrix $ A$ is invertible, then we can go further, giving more complete descriptions of these properties (see Theorems~\ref{cohen}, \ref{pato} and \ref{leonard}). In this case the only possible $ C$ with $ CA=B$ is $ C=B A^{-1}$.
On the other hand, $ A$ is invertible if and only if $ a_{nn} \neq 0$ for every $ n \in \mathbb{N}_0$. Then each $ n \times n$ block (let us denote these by $ A_{n \times n}$) is invertible. An elementary computation shows that, if $ d_{ij}$ and $ d_{ij}^{n}$ denote the elements of $ A^{-1}$ and $ A_{n \times n}^{-1}$ respectively, then $ d_{ij} = d_{ij}^{n}$ for every $1 \leq i,j \leq n$. In other words, the inverse of $ A$ can be computed blockwise by computing the inverses of the $ n \times n$ blocks. \\

We recall here the definition of the backward shift $ S : \ell_{\infty} \to \ell_{\infty}$, given by
$ S(x_{1},x_{2},x_{3}, \ldots) = (x_{2},x_{3},\ldots)$. Clearly $ \Vert S \Vert = 1$, and it is power bounded.

\begin{teo} \label{cohen}
	Let $ A$ and $ B$ be two lower triangular matrices, being $ A$ invertible. Then the following are equivalent
	\begin{enumerate}[(i)]
		\item \label{cohen1} the pair $ (A,B)$ satisfies \ac\!.
		
		\item \label{cohen2} if $ E$ is any lcHs, then  every $ A$-bounded sequence of operators on $ E$ is $ B$-bounded. 
		
		\item \label{cohen3} every $ A$-bounded sequence of operators on $ \ell_{\infty}$ is $ B$-bounded. 
		
		\item \label{cohen4} the sequence $\mathcal{S}= \big\{ (A^{-1} S)_{n} \colon  n=0,1,2,3, \ldots \big\} $ of operators on $ \ell_{\infty}$ is $ B$-bounded. 
		
	\end{enumerate} 
\end{teo}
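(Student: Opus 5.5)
We would prove the cycle \ref{cohen1}$\Rightarrow$\ref{cohen2}$\Rightarrow$\ref{cohen3}$\Rightarrow$\ref{cohen4}$\Rightarrow$\ref{cohen1}. The first implication is exactly Proposition~\ref{schubert}--\ref{madruga}. The second is trivial, since $\ell_\infty$ is a Banach space, hence a lcHs.

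For \ref{cohen3}$\Rightarrow$\ref{cohen4} we only need that $\mathcal{S}$ is $A$-bounded. The key observation is that $\mathcal{S}$ is built so that its $A$-expected value is the sequence of powers of $S$. Indeed, write $\mathcal{S}_i = (A^{-1}S)_i = \sum_{k\le i} d_{ik} S^k$, where $A^{-1}=(d_{ik})$, as in Remark~\ref{urbanlights}. By Remark~\ref{fox240},
\[
(A\mathcal{S})_n = \big((AA^{-1})S\big)_n = S^n .
\]
Since $\Vert S^n\Vert \le 1$, the sequence $\mathcal{S}$ is $A$-bounded, and \ref{cohen3} then gives that $\mathcal{S}$ is $B$-bounded. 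The same computation gives
\[
(B\mathcal{S})_n = \big((BA^{-1})S\big)_n = (CS)_n = \sum_{k=0}^{n} c_{nk} S^k ,
\]
with $C=BA^{-1}$ the unique matrix satisfying $CA=B$.

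The remaining step, \ref{cohen4}$\Rightarrow$\ref{cohen1}, is the main one. By \ref{cohen4} there is $K$ with $\Vert \sum_{k=0}^n c_{nk}S^k\Vert\le K$ for all $n$, and we must deduce $\sum_k |c_{nk}|\le K$. Fix $n$ and test the operator $\sum_k c_{nk}S^k$ on the vector $z(n)$ of \eqref{zn}, with the indexing shifted to match the convention that $S$ acts on sequences starting at index $1$. Since $(S^k x)_1 = x_{k+1}$, the first coordinate of the image is
\[
\sum_k c_{nk}\, x_{k+1} .
\]
Choose $x\in B_{\ell_\infty}$ with $x_{k+1} = \overline{c_{nk}}/|c_{nk}|$ when $c_{nk}\neq 0$ and $0$ otherwise. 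Then this first coordinate equals $\sum_k |c_{nk}|$, so
\[
\sum_k |c_{nk}| \le \Big\Vert \sum_k c_{nk}S^k x\Big\Vert_\infty \le K .
\]
This is uniform in $n$, which is \eqref{astc}, so $(A,B)$ satisfies \ac.

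The main obstacles are bookkeeping rather than conceptual. We need to justify the associativity $B(A^{-1}\mathcal{T}) = (BA^{-1})\mathcal{T}$ for lower triangular matrices, which is Remark~\ref{fox240} since all sums are finite, and to keep the index conventions of $S$ consistent.
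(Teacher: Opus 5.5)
Your proposal is correct and matches the paper's proof essentially step by step. It uses the same cycle of implications and the same identity $(A(A^{-1}\mathcal{S}))_n=S^n$ to get $A$-boundedness of $\mathcal{S}$. It then applies $\sum_i c_{ni}S^i$ to the sign vector $z(n)$ and reads off the first coordinate to obtain $\sum_i|c_{ni}|\le K$; the only difference is that you make the index shift for $S$ explicit, where the paper takes the $j=0$ term of a supremum.
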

\begin{proof}
	Proposition~\ref{schubert}--\ref{madruga} is exactly that~\ref{cohen1} implies~\ref{cohen2}, and~\ref{cohen2} clearly implies~\ref{cohen3}. Let us suppose now that every $ A$-bounded sequence of operators $ \mathcal{T}=(T_{n})_{n}$ on $ \ell_{\infty}$ is $ B$-bounded. Note that $ \big(A(A^{-1}\mathcal{S})\big)_{n} = \big(AA^{-1}(\mathcal{S})\big)_{n} = S^{n}$ for every $ n$. Since  $ S$ is power bounded. this gives that $\mathcal{S} $ is $ A$-bounded and therefore it is $ B$-bounded.\\
	To complete the proof, assume that~\ref{cohen4} holds, and let us see that the pair $ (A,B)$ satisfies \ac\!. Define $ C=B A^{-1}$, and we need to check that it satisfies \eqref{astc}. For each $ n \in \mathbb{N}_0$ we consider the sequences $z(n)$ defined in \eqref{zn}. Note that
	\begin{equation*}
		(B(A^{-1} S))_n = (C S)_{n} = \sum_{i=0}^{n} c_{ni} S^{i} \,,
	\end{equation*}
	and, by assumption $ \big(\Vert (B(A^{-1} S))_n \Vert\big)_{n}$ is bounded (say, by $ K$). Then we have,  for every $ n \in \mathbb{N}_{0}$,
	\begin{multline*}
		K \geq \Vert (B(A^{-1} S))_n \Vert 
		= \sup_{\Vert x \Vert_{\infty} \leq 1} \Big\Vert \sum_{i=0}^{n} c_{ni} S^{i} x \Big\Vert_{\infty}
		\geq \Big\Vert \sum_{i=0}^{n} c_{ni} S^{i} z(n) \Big\Vert_{\infty}\\
		= \sup_{j \in \mathbb{N}_0}  \Big\vert \sum_{i=0}^{n} c_{ni} \frac{\overline{c_{n(i+j)}}}{\vert c_{n(i+j)}\vert } \Big\vert
		\geq \Big\vert \sum_{i=0}^{n} c_{ni}  \frac{\overline{c_{ni}}}{\vert c_{ni}\vert } \Big\vert
		= \Big\vert \sum_{i=0}^{n} \vert c_{ni}\vert  \Big\vert
		=  \sum_{i=0}^{n} \vert c_{ni}\vert  \,,
	\end{multline*}
	which proves our claim.
\end{proof}

To deal with \acc we need a different operator. For $ x = (x_{n})_{n} \in c_{0}$ we write  
$\langle e_n,x\rangle = x_{n}$, and consider the sequence of operators on $ c_{0}$ given by
\begin{equation*}
	\mathcal{E}= \big\{ \langle e_n,\cdot\rangle e_n \colon  n=0,1,2,3, \ldots \big\} \,.
\end{equation*}

\begin{teo} \label{pato}
	Let $ A$ and $ B$ be two lower triangular matrices, being $ A$ invertible. Then the following are equivalent
	\begin{enumerate}[(i)]
		\item \label{pato1} the pair $ (A,B)$ satisfies \acc\!.
		
		\item \label{pato2} if $ E$ is any lcHs,  then  every $ A$-null sequence of operators on $ E$  is $ B$-null.
		
		\item \label{pato3} every $ A$-null sequence of operators on $ c_0$ is $ B$-null. 
		
		\item \label{pato4} the sequence $A^{-1} \mathcal{E}$ of operators on $ c_0$ is $ B$-null. 
	\end{enumerate} 
\end{teo}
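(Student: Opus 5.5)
The chain of implications runs (i)$\Rightarrow$(ii)$\Rightarrow$(iii)$\Rightarrow$(iv)$\Rightarrow$(i), mirroring the proof of Theorem~\ref{cohen}.

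\emph{First three implications.} Proposition~\ref{schubert}--\ref{null to null} is exactly (i)$\Rightarrow$(ii). The implication (ii)$\Rightarrow$(iii) is trivial.

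For (iii)$\Rightarrow$(iv), observe that by Remark~\ref{fox240},
\[
\big(A(A^{-1}\mathcal{E})\big)_n = \langle e_n,\cdot\rangle e_n .
\]
For $x\in c_0$ we have $\Vert \langle e_n,x\rangle e_n\Vert_\infty = |x_n| \to 0$. Hence $A^{-1}\mathcal{E}$ is $A$-null, and by (iii) it is $B$-null.

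\emph{Reduction of (iv)$\Rightarrow$(i) to a statement about $C$.} Set $C=BA^{-1}$, the only matrix with $CA=B$. By Remark~\ref{fox240},
\[
\big(B(A^{-1}\mathcal{E})\big)_n = (C\mathcal{E})_n = \sum_{i=0}^{n} c_{ni}\,\langle e_i,\cdot\rangle e_i .
\]
For $x\in c_0$ this operator gives the vector $(c_{n0}x_0, c_{n1}x_1,\ldots,c_{nn}x_n,0,\ldots)$, so
\[
\Vert (C\mathcal{E})_n x\Vert_\infty = \max_{i\le n} |c_{ni}x_i| .
\]
Hypothesis (iv) therefore says
\[
\max_{0\le i\le n}|c_{ni}x_i| \to 0 \quad \text{for every } x\in c_0 .
\]

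\emph{Column condition.} Taking $x=e_i$ gives $c_{ni}\to 0$, which is \eqref{ast2c}.

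\emph{Row-sum bound (main obstacle).} We still need \eqref{astc}, $\sup_n\sum_i|c_{ni}|<\infty$. The difficulty is that the information above only controls a maximum, not a sum. Using the unimodular vectors $z(n)$ from \eqref{zn} as in Theorem~\ref{cohen} does not help here: the diagonal operators $\langle e_i,\cdot\rangle e_i$ never mix coordinates, so no $\ell_1$-type quantity appears.

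I would first try the uniform boundedness principle. Since $c_0$ is Banach and the operators $(C\mathcal{E})_n$ converge pointwise to $0$, they are uniformly bounded. But
\[
\Vert (C\mathcal{E})_n\Vert = \max_{i\le n}|c_{ni}| ,
\]
so this only gives $\sup_{n,i}|c_{ni}|<\infty$, not a bound on the row sums.

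So (iv) as stated seems to yield only column convergence plus entrywise boundedness. My next step would be to test this on a counterexample. Take $A=I$ and $c_{ni}=1/\sqrt{n+1}$ for $i\le n$. Then $\max_{i\le n}|c_{ni}x_i|\le 1/\sqrt{n+1}\,\Vert x\Vert_\infty\to 0$, so (iv) holds. However, the row sums equal $\sqrt{n+1}$ and are unbounded, so \ac fails.

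Hence (iv)$\Rightarrow$(i) needs either a stronger test sequence or a reinterpretation of $\mathcal{E}$. The natural replacement is the rank-one operators $x\mapsto \langle e_0,x\rangle\,y_n$ for suitable $y_n$. Alternatively, one could use Proposition~\ref{cpeb}--\ref{cpeb2} and show that $C$ maps $c_0$ into $c_0$ by applying (iii) to sequences built from arbitrary $x\in c_0$, for instance $T_n=\langle e_0,\cdot\rangle (A^{-1}u)_n$ with scalar sequences $u\in c_0$. Then $B$-nullness gives $Cu\in c_0$ for all $u\in c_0$, and the closed graph theorem together with Proposition~\ref{cpeb}--\ref{cpeb2} yields \acc. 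In practice I would route the proof as (iii)$\Rightarrow$(i) this way, and then check whether (iv) can be fitted into this scheme.
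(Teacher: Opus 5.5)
Your diagnosis is correct, and the gap lies in the statement and in the paper's proof, not in your reasoning. Your arguments for (i)$\Rightarrow$(ii)$\Rightarrow$(iii)$\Rightarrow$(iv) are the paper's. For (iv)$\Rightarrow$(i) the paper does precisely what you said cannot work. It evaluates $(B(A^{-1}\mathcal{E}))_n=\sum_{i\le n}c_{ni}\langle e_i,\cdot\rangle e_i$ at $z(n)$ and asserts
\[
\Big\Vert \sum_{i=0}^{n} c_{ni}\langle e_i,z(n)\rangle e_i\Big\Vert_\infty=\Big\vert\sum_{i=0}^{n} c_{ni}\frac{\overline{c_{ni}}}{\vert c_{ni}\vert}\Big\vert=\sum_{i=0}^{n}\vert c_{ni}\vert .
\]
But the vector inside the norm is $\sum_{i\le n}\vert c_{ni}\vert e_i$, whose sup-norm is only $\max_{i\le n}\vert c_{ni}\vert$. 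So the paper really obtains just $\sup_{n,i}\vert c_{ni}\vert<\infty$ together with the column condition, which it derives correctly from $e_i$. Your counterexample ($A=I$, $b_{ni}=1/\sqrt{n+1}$ for $i\le n$) is valid, and (iii) fails for it as well: $T_ix=(i+1)^{-1/2}x_0e_0$ is $I$-null, while $(B\mathcal{T})_nx=(n+1)^{-1/2}\sum_{k=1}^{n+1}k^{-1/2}\,x_0e_0\to 2x_0e_0$. Hence (iv) is strictly weaker than (i)--(iii), and the theorem is false as stated.

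Your proposed repair, proving (iii)$\Rightarrow$(i) directly, is sound. It is a genuinely different route from the paper's, which passes through (iv). You only need to make the test operators act on $c_0$: for $u\in c_0$ put $T_nx=(A^{-1}u)_n\,x_0\,e_0$. Then $(A\mathcal{T})_nx=u_nx_0e_0$ and $(B\mathcal{T})_nx=(Cu)_nx_0e_0$, so (iii) gives $C(c_0)\subseteq c_0$. The closed graph theorem (or Banach--Steinhaus applied to the functionals $u\mapsto(Cu)_n$, whose norms are $\sum_i\vert c_{ni}\vert$), together with Proposition~\ref{cpeb}--\ref{cpeb2}, yields property \acc for $(A,B)$. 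This proves (i)$\Leftrightarrow$(ii)$\Leftrightarrow$(iii).

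If you also want a single test sequence as in Theorem~\ref{cohen}, take $\mathcal{E}'=\{\langle e_n,\cdot\rangle e_0\colon n\in\mathbb{N}_0\}$. It packages your whole family into one sequence, with the variable $x$ playing the role of $u$. It is $I$-null on $c_0$, and $(B(A^{-1}\mathcal{E}'))_nx=(Cx)_ne_0$. For it, the paper's displayed computation with $z(n)$ becomes literally correct. Thus the equivalence holds with $\mathcal{E}'$ in place of $\mathcal{E}$ in (iv).
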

\begin{proof}
	Proposition~\ref{schubert}--\ref{null to null} is exactly that~\ref{pato1} implies~\ref{pato2}, and~\ref{pato2} clearly implies~\ref{pato3}. Let us suppose now that every $ A$-null sequence of operators on $ c_0$ is $ B$-null. For each $ x \in c_{0}$ we have
	\begin{equation*}
		A(A^{-1} \mathcal{E})_{n}(x) =\mathcal{E}_{n}x =  \langle e_n, x \rangle e_n = x_{n} e_{n} \,,
	\end{equation*}
	and 
	\begin{equation*}
		\lim_{n}  \big\Vert A(A^{-1} \mathcal{E})_{n}(x) \big\Vert_{\infty} = \lim_{n} \vert x_{n} \vert =0\,.
	\end{equation*}
	So, $A^{-1} \mathcal{E}$ is $ A$-null and, by assumption, also $ B$-null.
	Finally, assuming that~\ref{pato4} holds, let us see that the pair $ (A,B)$ satisfies \acc\!. We need to check that $ C=B A^{-1}$ satisfies \eqref{astc} and \eqref{ast2c}. For the first condition, take again the sequences  $z(n)$ defined in \eqref{zn}. Since $A^{-1} \mathcal{E}$ is $ B$-null, it is also $ B$-bounded, and we can find $ K>0$ such that, for every $ n \in \mathbb{N}_0$, we have
	\begin{multline*}
		K \geq \Vert (BA^{-1} \mathcal{E})_n \Vert = \sup_{\Vert x \Vert_{\infty} \leq 1} \Big\Vert \sum_{i=0}^{n} c_{ni} \langle e_i,x\rangle e_i \Big\Vert_{\infty} \\
		\geq \Big\Vert \sum_{i=0}^{n} c_{ni} \langle e_i,z(n)\rangle e_i \Big\Vert_{\infty}
		=   \Big\vert \sum_{i=0}^{n} c_{ni} \frac{\overline{c_{ni}}}{\vert c_{ni}\vert }  \Big\vert
		=  \sum_{i=0}^{n} \vert c_{ni}\vert  \,,
	\end{multline*}
	which proves that \eqref{astc} holds.\\	
	On the other hand, we know that $  \big\{  (B(A^{-1} \mathcal{E}))_n \colon n=0,1,2,3, \ldots \big\} $ converges to $0$ pointwise. In particular, for each fixed $e_i$ we have
	\begin{multline*}
		0=\lim_{n\to\infty}\Vert (B(A^{-1} \mathcal{E}))_n e_i \Vert_\infty = \lim_{n\to\infty} \Big\Vert \sum_{j=0}^{n} c_{nj} \langle e_j,e_i\rangle e_j \Big\Vert_{\infty}\\
		= \lim_{n\to\infty}  \Big\vert \sum_{i=0}^{n} c_{ni} \delta_{ji} \Big\vert 
		=\lim_{n\to\infty} \vert c_{ni}\vert  \,,
	\end{multline*}
	which gives \eqref{ast2c} and proves our claim.
\end{proof}

\begin{teo} \label{leonard}
	Let $ A$ and $ B$ be two lower triangular matrices, being $ A$ invertible. Then the following are equivalent
	\begin{enumerate}[(i)]
		\item \label{leonard1} the pair $ (A,B)$ satisfies \accc\!.
		
		\item \label{leonard2} if $ E$ is any lcHs,  then  every $ A$-bounded sequence of operators on $ E$  is $ B$-null.
		
		\item \label{leonard3} every $ A$-bounded sequence of operators on $ \ell_{\infty}$ is $ B$-null. 
		
		\item \label{leonard4} the sequence $\mathcal{S}= \big\{ (A^{-1} S)_{n} \colon  n=0,1,2,3, \ldots \big\} $ of operators on $ \ell_{\infty}$ is $ B$-null. 
	\end{enumerate} 
\end{teo}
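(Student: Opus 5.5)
We follow the same cycle of implications as in Theorem~\ref{cohen}. Proposition~\ref{schubert}--\ref{bdd to null} gives exactly that \ref{leonard1} implies \ref{leonard2}, and \ref{leonard2} trivially implies \ref{leonard3}.

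For \ref{leonard3} implies \ref{leonard4}, we argue as in Theorem~\ref{cohen}. By Remark~\ref{fox240}, $\big(A(A^{-1}\mathcal{S})\big)_n = S^n$ for every $n$. Since $\Vert S\Vert = 1$, the sequence $A^{-1}\mathcal{S}$ is $A$-bounded on $\ell_\infty$, and by \ref{leonard3} it is $B$-null.

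The remaining implication, \ref{leonard4} implies \ref{leonard1}, is the main point. Put $C = BA^{-1}$, so that
\[
(B(A^{-1}S))_n = (CS)_n = \sum_{i=0}^n c_{ni}S^i .
\]
Hypothesis \ref{leonard4} is only pointwise convergence to $0$, whereas \eqref{ast3c} is a uniform-in-$x$ statement. The computation in Theorem~\ref{cohen} used the test vector $z(n)$, which changes with $n$, so pointwise convergence cannot be applied to it directly. The remedy is to build a \emph{single} vector $x\in\ell_\infty$ that encodes the signs of all rows simultaneously, using the shift to pick out different blocks of $x$.

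Concretely, choose disjoint blocks of coordinates. For each $n$, take an offset $j_n$ with $j_n > j_{n-1}+n$ (for instance $j_n = n(n+1)/2$, placing block $n$ at positions $j_n,\dots,j_n+n$). Define $x$ by $x_{j_n+i} = d_{ni}$ for $0\le i\le n$, with the $d_{ni}$ as in \eqref{zn}, and set $x_k=0$ at any position outside the blocks. Then $\Vert x\Vert_\infty\le 1$. Recall that $S^i$ moves coordinate $j_n+i$ of $x$ to position $j_n$, that is, $(S^ix)_{j_n} = x_{j_n+i}$. Reading off the $j_n$-th coordinate therefore gives
\[
\Big\Vert \sum_{i=0}^n c_{ni}S^i x\Big\Vert_\infty \ \ge\ \Big|\sum_{i=0}^n c_{ni}\,x_{j_n+i}\Big| = \sum_{i=0}^n |c_{ni}| .
\]
By \ref{leonard4}, the left-hand side tends to $0$ as $n\to\infty$, so \eqref{ast3c} holds.

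The blocks must be disjoint so that the entries of $x$ are unambiguously defined. This is guaranteed by the choice of the $j_n$, and this bookkeeping is the step to check carefully. Note also that the indexing convention of $S$ (whether coordinates start at $0$ or $1$) only shifts indices and does not affect the argument.
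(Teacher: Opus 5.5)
Your proposal is correct and is essentially the paper's proof. The first implications go as in Theorem~\ref{cohen}, and for (iv)$\Rightarrow$(i) the paper uses exactly your single test vector $z=(d_{00},d_{10},d_{11},\ldots,d_{n0},\ldots,d_{nn},\ldots)$ formed by concatenating the sign rows, i.e.\ your offsets $j_n=n(n+1)/2$, which you make more explicit than the paper does. There is one harmless slip: disjointness of the blocks only needs $j_n\ge j_{n-1}+n$, which your choice satisfies with equality, not the strict inequality $j_n>j_{n-1}+n$ that you stated.
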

\begin{proof}
	As in the proof of Proposition~\ref{cohen}, it sufices to show that \ref{leonard4} implies \ref{leonard1}. Let us see that the pair $ (A,B)$ satisfies \accc\!. We need to check that $ C=B A^{-1}$ satisfies \eqref{ast3c}. Define $D=(d_{nk})_{n,k=0}^\infty$ with $d_{nk}=\tfrac{\overline{c_{nk}}}{\vert c_{nk}\vert }$ if $c_{nk}\neq 0$ and $0$ otherwise, consider the element of $\ell_\infty$ given by 
	\begin{equation}
		z=\big(d_{00},d_{10},d_{11},\ldots,d_{n0}, \ldots, d_{nn}, \ldots) \,.
	\end{equation}
	We know that $ B\mathcal{S}= \big\{  (B(A^{-1} S))_n \colon n=0,1,2,3, \ldots \big\} $ converges to $0$ pointwise. In particular, for $z$ we have
	\begin{multline*}
		0=\lim_{n\to\infty}\Vert (B(A^{-1} S))_nz \Vert_\infty = \lim_{n\to\infty} \Big\Vert \sum_{i=0}^{n} c_{ni} S^{i} z \Big\Vert_{\infty}
		= \lim_{n\to\infty} \sup_{j \in \mathbb{N}_0}  \Big\vert \sum_{i=0}^{n} c_{ni} z_{i+j} \Big\vert \\
		\geq \lim_{n\to\infty} \Big\vert \sum_{i=0}^{n} c_{ni}  \frac{\overline{c_{ni}}}{\vert c_{ni}\vert } \Big\vert
		= \lim_{n\to\infty} \Big\vert \sum_{i=0}^{n} \vert c_{ni}\vert  \Big\vert
		=  \lim_{n\to\infty} \sum_{i=0}^{n} \vert c_{ni}\vert  \,,
	\end{multline*}
	which proves our claim.
\end{proof}

\begin{nota}
	When we want to deal with absolute boundedness, the Schur product (which we denote by $ \times$) happens to be a more convenient tool. Suppose $ D$ is a bounded matrix (in the sense that $ \sup_{i,n} \vert d_{ni} \vert < \infty$) and let $ B=D\times A$. Take now an absolutely $ A$-bounded sequence of operators $ \mathcal{T}$, and for a continuous seminorm $ p$ choose $ q$ (a continuous seminorm) satisfying \eqref{abdd}. Then
	\begin{equation*}
		\sum_{i=1}^{n} p \big(b_{ni} T_{i} x\big)
		= \sum_{i=1}^{n} p \big(d_{ni} a_{ni} T_{i} x\big)
		\leq \sup_{j,m}\vert d_{mj} \vert \sum_{i=1}^{n} p \big(a_{ni} T_{i} x\big)
		\leq K q(x) \,,
	\end{equation*}
	for every $ x$. This shows that $ \mathcal{T}$ is absolutely $ B$-bounded.
\end{nota}

So far we have established conditions linking boundedness and being null for different matrices. We are also interested in ergodicity, but this requires some more work. We give some conditions in Theorem~\ref{suppe}.

\section{Operators. Eberlein's theorem and consequences}

We focus now on the behaviour of operators (recall Remark~\ref{urbanlights}), trying to find conditions that yield that an $ A$-bounded operator is $ A$-ergodic, following the guideline of classical results relating power boundedness and mean ergodicity. From now on we will always assume that $ A$ is a probability matrix. Our first step is to get an analogue of Eberlein’s mean ergodic theorem (see \cite[Chapter~2, \S~2.1, Theorem~1.1]{krengel} or \cite[Theorem~5.4]{llibre}. Here we need some conditions that guarantee certain stability of the limit of the iterates. In the classical case the operator has to satisfy $ \frac{1}{n} T^{n} x_{0} \to 0$ for some $ x_{0}$. In our setting we ask \eqref{limit T invariant}, which in the case of the Cesàro matrix \eqref{sagrario} gives the previous condition. Before we proceed,  let us recall that, if $(X, \tau)$ is a Hausdorff topological space, then  a point $y\in E$ is a cluster point of the sequence $(x_n)_{n} \subseteq X$ if  every neighbourhood of $y$ contains infinitely many points of $(x_n)_{n}$. If $ X$ is compact, then every sequence  in $X$ has a cluster point.

\begin{teo}\label{Eberlein}
	Let $ A$ be a probability matrix, and $T$ an $A$-bounded operator. Suppose $x_{0} \in E$ satisfies 
	\begin{equation}\label{limit T invariant}
		\lim_{n\to\infty}(T- \id) (AT)_{n}x_{0}=\lim_{n\to\infty}  (AT)_{n} (T-\id)x_{0}=0 \,.
	\end{equation}
	Then, for each fixed $y\in E$, the following conditions are equivalent
	\begin{enumerate}[(i)]
		\item \label{eb1} $Ty=y$ and $y$ belongs to the closed convex hull of the set $\{T^{n}x_{0}:n\in \mathbb{N}_0\}$,
		\item\label{eb2} $y=\lim\limits_{n\to\infty}(AT)_{n}x_{0}$,
		\item\label{eb3} $y=\sigma(E,E^\prime)-\lim\limits_{n\to\infty}(AT)_{n}x_{0}$,
		\item\label{eb4} $y$ is a $\sigma(E,E^\prime)$-cluster point of $\big\{ (AT)_{n}x_{0} \colon n \in \mathbb{N}_{0}\big\}$. 
	\end{enumerate}
\end{teo}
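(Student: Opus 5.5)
We follow the classical proof of Eberlein's theorem, proving (i)$\Rightarrow$(ii)$\Rightarrow$(iii)$\Rightarrow$(iv)$\Rightarrow$(i). Throughout we use three consequences of the hypotheses. First, since $A$ is a probability matrix, each $(AT)_n x_0=\sum_{i=0}^n a_{ni}T^i x_0$ is a convex combination of the $T^ix_0$. Second, $A$-boundedness means that $\{(AT)_n\}_n$ is equicontinuous. Third, $T$ commutes with every $(AT)_n$, so the two limits in \eqref{limit T invariant} concern the same vectors.

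\textbf{(ii)$\Rightarrow$(iii)$\Rightarrow$(iv).} These are immediate: strong convergence implies weak convergence, and a weak limit is a weak cluster point.

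\textbf{(iv)$\Rightarrow$(i).} Let $y$ be a $\sigma(E,E')$-cluster point of $\{(AT)_nx_0\}$. Every $(AT)_nx_0$ lies in $\overline{\mathrm{co}}\{T^nx_0\}$. This set is convex and closed, hence weakly closed by Mazur, so $y$ lies in it.

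For invariance, $T-\id$ is continuous and linear, hence weakly continuous. Therefore $(T-\id)y$ is a weak cluster point of $\{(T-\id)(AT)_nx_0\}_n$. By \eqref{limit T invariant} this sequence tends to $0$ in $E$, hence weakly. In a Hausdorff topology, a cluster point of a convergent sequence equals its limit; more precisely, every weak neighbourhood of $(T-\id)y$ contains infinitely many terms, which forces it to be $0$. Thus $Ty=y$.

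\textbf{(i)$\Rightarrow$(ii).} This is the main step. Since $Ty=y$, we have $T^iy=y$ for all $i$, and since the rows of $A$ sum to $1$, $(AT)_ny=y$. Hence
\[(AT)_nx_0-y=(AT)_n(x_0-z)+(AT)_n(z-y)\]
for any $z$. Fix a continuous seminorm $p$ and $\varepsilon>0$, and let $q$ be the seminorm given by equicontinuity.

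The key claim is that $x_0-y$ lies in the closure of the linear subspace $\{u:\ (AT)_nu\to 0\}$. Since $y\in\overline{\mathrm{co}}\{T^kx_0\}$, we can choose $w=\sum_k\lambda_kT^kx_0$ (a finite convex combination) with $q(y-w)<\varepsilon$. Then
\[x_0-w=\sum_k\lambda_k(\id-T^k)x_0,\]
and $\id-T^k=-(T-\id)(\id+T+\dots+T^{k-1})$. Using commutation, this gives
\[(AT)_n(\id-T^k)x_0=-\sum_{j<k}T^j(AT)_n(T-\id)x_0 .\]
Each term tends to $0$ by \eqref{limit T invariant} and the continuity of $T^j$. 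It follows that $(AT)_n(x_0-w)\to0$.

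Finally, $p((AT)_n(w-y))\le q(w-y)<\varepsilon$ for all $n$. Combining the two pieces,
\[\limsup_n p\big((AT)_nx_0-y\big)\le\varepsilon .\]
Since $\varepsilon$ and $p$ were arbitrary, this proves (ii).

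The only place where care is needed is the combination just used: equicontinuity to control the approximation error $y-w$ uniformly in $n$, together with the identity $\id-T^k=-(T-\id)\sum_{j<k}T^j$ to reduce everything to the hypothesis \eqref{limit T invariant}.
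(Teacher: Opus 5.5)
Your proof is correct and follows essentially the same route as the paper. For (i)$\Rightarrow$(ii) the paper uses the same splitting $(AT)_nx_0-y=(AT)_n(x_0-w)+(AT)_n(w-y)$, with equicontinuity controlling the second term and the identity $(AT)_nT^kx_0-(AT)_nx_0=\sum_{i<k}T^i(AT)_n(T-\id)x_0$ the first; for (iv)$\Rightarrow$(i) it uses Mazur's theorem and then proves $Ty=y$ by working explicitly with the functionals $v$ and $T^t v$, which is a hands-on version of your appeal to weak continuity of $T-\id$ and uniqueness of cluster points of a weakly null sequence.
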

\begin{proof}
	The implications~\ref{eb2}$\Rightarrow$\ref{eb3}$\Rightarrow$\ref{eb4} are trivial. Let us see that~\ref{eb1} implies~\ref{eb2}. Fix a continuous seminorm $ p$ and $\varepsilon>0$. Since $ T$ is $ A$-bounded, we can find a seminorm $q$ so that 
	\begin{equation} \label{mikel}
		p((AT)_{n} z)\leq q(z) \,,
	\end{equation}
	for all $z\in E$ and all $n\in \mathbb{N}_0$. Now, $y$ belongs to the closed convex hull of the set $(T^m x_{0})_{m}$, then  there are $m\in \mathbb{N}_0$ and $\alpha_0,\dots, \alpha_m \geq 0$, with $\sum_{k=0}^m\alpha_k=1$, so that 
	\begin{equation} \label{landa}
		q\Big(y - \sum_{k=0}^m \alpha_kT^kx_{0}\Big) < \frac{\varepsilon}{2}\,.
	\end{equation}
	Let us write $ Sx_{0}= \sum_{k=0}^m \alpha_kT^kx_{0}$. Observe that $(AT)_n y= \sum_{i=1}^n a_{ni} T^{i}y= \sum_{i=1}^n a_{ni}y=y$ for every $n\in \mathbb{N}_0$.
	Then, for $ n \in \mathbb{N}_0$ we have
	\begin{multline*}
		p(y-(AT)_n x_{0})=p \big((AT)_n y-(AT)_n Sx_{0}+(AT)_n Sx_{0}-(AT)_n x_{0}\big) \\
		\leq p\big((AT)_n (y-Sx_{0})\big) + p\big((AT)_n Sx_{0}-(AT)_n x_{0}\big) \,.
	\end{multline*}
	We deal with the first term using \eqref{mikel} and \eqref{landa}, getting
	\begin{equation} \label{missa}
		p\big((AT)_n (y-Sx_{0})\big) \leq q(y-Sx_{0}) < \frac{\varepsilon}{2} \,.
	\end{equation}
	To handle the second term, observe that
	\begin{multline*}
		p\big((AT)_n Sx_{0}-(AT)_n x_{0}\big)
		=  p\left(\sum^{m}_{k=0}\alpha_k (AT)_n T^kx_{0}-(AT)_n x_{0}\right) \\
		\leq \sum_{k=0}^m \alpha_k p\big( (AT)_n T^k x_{0}-(AT)_n x_{0} \big)
	\end{multline*}
	Observe that for $k,n \in \mathbb{N}_0$ we have
	\[
	(AT)_n T^k x_0- (AT)_n x_{0} = \sum_{i=0}^{k-1} T^{i}(AT)_n (T-\id)x_{0} \,,
	\]
	Fix now $k= 1, \ldots , m$ and take $ 1 \leq i \leq k$. Since  $T^i$ is continuous, we can find a seminorm $q_i$ such that $ p(T^iz)<q_i(z)$ for all $z\in E$. Now, using  \eqref{limit T invariant} we can find $n_i(k)\in \mathbb{N}_0$ such that 
	\begin{equation*}
		q_{i} \big(  (AT)_n (T-\id)x_{0} \big) \leq \frac{\varepsilon}{2  k} \,,
	\end{equation*}
	for every $ n \geq n_i(k)$. Taking $ n_{0} = \max \{ n_j(k) \colon 1 \leq j,k \leq m \}$ and $ n \geq n_{0}$ we have
	\begin{equation*}
		\sum_{i=0}^{k-1} p \big(T^{i}(AT)_n (T-\id)x_{0} \big)
		\leq \sum_{i=0}^{k-1} q_{i} \big((AT)_n (T-\id)x_{0} \big)
		< \frac{\varepsilon}{2} \,,
	\end{equation*}
	for each $ k = 1 \ldots ,m$. Hence
	\begin{equation*}
		p\big((AT)_n Sx_{0}-(AT)_n x_{0}\big)
		\leq  \sum_{k=0}^m \alpha_k  \frac{\varepsilon}{2}  = \frac{\varepsilon}{2}  \,.
	\end{equation*}
	This and \eqref{missa} show that $ p(y-(AT)_n x_{0}) < \varepsilon$ for every $ n \geq n_{0}$ and yield the conclusion.\\
	To complete the proof let us see that~\ref{eb4} implies~\ref{eb1}. Let us denote by $C$ the convex hull of the set $\{T^mx_{0}:m\in \mathbb{N}_0\}$, and observe that (because $ A$ is a probablity matrix) $\{(AT)_n x_0:n\in \mathbb{N}_0\}\subseteq C$. Since $ y$ is a $ \sigma(E,E')$-cluster point, it belongs to the $ \sigma(E,E')$-closure of $ C$ which (being $ C$ convex), coincides with the $ \Vert \cdot \Vert$-closure. So, it is only left to see that $ Ty=y$. To do that, fix $v\in E^\prime$ and $ \varepsilon >0$ and note that by  \eqref{limit T invariant} we have
	\[
	\lim_{n\to \infty} v \big(T(A T)_n x_{0} - (A T)_n x_{0} \big)=0\,,
	\]
	and we can find $n_0\in \mathbb{N}_0$  such that 
	\[ 
	\left| v\big(T(AT)_n x_{0} - (AT)_n x_{0} \big) \right| < \frac{\varepsilon}{3} \,,
	\]
	for all $n\geq n_0$. Consider now the set
	\begin{equation*}
		U:=\left\{ z\in E: |v(z-y)|<\varepsilon/3, \big| T^t(v)(z-y) \big|<\varepsilon/3 \right\} \,,
	\end{equation*} 
	which is a $\sigma(E,E^\prime)$-neighbour\-hood of $y$. Since $y$ is a $\sigma(E,E^\prime)$-cluster point of $\{T^{n}x_{0}:n\in \mathbb{N}_0\}$, given $n_0$  there is $n>n_0$ such that $\left|v(y-(AT)_n x_{0})\right|<\varepsilon/3$ and $\left|T^t(v)(y-(AT)_n x_{0})\right|<\varepsilon/3$. Now, we have
	\begin{multline*}
		\left| v(y-Ty)\right| \leq \left|v(y-(A\mathcal{T})_n x)\right|  \\+ \left|v((A\mathcal{T})_n x-T(A\mathcal{T})_n x) \right| + \left|v(T(AT)_n x-Ty) \right|  <\varepsilon.
	\end{multline*}
	Since $v\in E^\prime$ and $\varepsilon>0$ were arbitrary, the Hahn-Banach Theorem yields $y=Ty$.
\end{proof}

As a consequence of the previous result we are able to characterise $A$-ergodicity for $A$-bounded operators (cf. \cite{ABR} or \cite[Corollary~5.6]{llibre}).

\begin{coro}\label{AE coro}
	Let $A$ be probability matrix, and $ T$ an $A$-bounded operator satisfying \eqref{limit T invariant} for every $x\in E$. Then $T$ is $A$-ergodic if and only if $\{(AT)_nx:n\in \mathbb{N}_0\}$ is $\sigma(E,E^\prime)$-relatively compact for every $x\in E$.
\end{coro}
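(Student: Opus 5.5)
The plan is to derive both directions from Theorem~\ref{Eberlein}, applied with $x_0=x$ for each fixed $x\in E$. This is allowed because \eqref{limit T invariant} is assumed for every $x\in E$.

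\emph{Necessity.} Suppose $T$ is $A$-ergodic, so $(AT)_n x\to Px$ for every $x\in E$. A convergent sequence together with its limit forms a compact set in the original topology. That set is then also $\sigma(E,E')$-compact, since the weak topology is coarser. Hence $\{(AT)_nx:n\in\mathbb{N}_0\}$ is $\sigma(E,E')$-relatively compact for every $x$.

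\emph{Sufficiency.} Fix $x\in E$ and let $K$ be the $\sigma(E,E')$-closure of $\{(AT)_nx:n\in\mathbb{N}_0\}$, which is weakly compact by hypothesis. The sequence $((AT)_nx)_n$ lies in this compact Hausdorff space, so, by the remark preceding Theorem~\ref{Eberlein}, it has a $\sigma(E,E')$-cluster point $y$. Condition~\ref{eb4} of Theorem~\ref{Eberlein} then holds, and the implication \ref{eb4}$\Rightarrow$\ref{eb2} gives $(AT)_nx\to y$ in the topology of $E$.

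Next I would define $Px:=y$, i.e.\ $Px=\lim_n (AT)_nx$ for each $x$. The point $y$ is determined by $x$, because limits are unique in a Hausdorff space (and by the equivalence in Theorem~\ref{Eberlein}, all weak cluster points coincide with this limit). Linearity of $P$ follows from linearity of each $(AT)_n$ and of limits. Continuity of $P$ comes from $A$-boundedness: given a continuous seminorm $p$, choose $q$ with $p((AT)_nz)\le q(z)$ for all $n$ and $z$, and pass to the limit to get $p(Pz)\le q(z)$. So $P$ is a continuous linear operator and $T$ is $A$-ergodic.

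The only delicate point is extracting a cluster point from weak relative compactness. Here I will use the general fact that in a compact space every sequence has a cluster point (in the sense of nets/neighbourhoods), rather than a convergent subsequence; sequential compactness is not needed, since \ref{eb4} asks only for a cluster point.
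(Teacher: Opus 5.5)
Your proposal is correct and follows essentially the same route as the paper. Necessity comes from convergence. For sufficiency, weak relative compactness yields a $\sigma(E,E')$-cluster point, Theorem~\ref{Eberlein} upgrades it to the limit of $(AT)_nx$, and continuity of $P$ comes from the equicontinuity of $\{(AT)_n\}$. You just make explicit the details the paper leaves implicit: uniqueness of the limit, linearity, and the estimate $p(Pz)\le q(z)$.
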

\begin{proof}
	If $T$ is $A$-ergodic the sequence $\{(AT)_nx:n\in \mathbb{N}_0\}$ is convergent in $E$ for every $x\in E$. In particular, it is a $\sigma(E,E^\prime)$-relatively compact set for each $x\in E$.
	
	Conversely, we have that given $x\in E$ the set $\{(AT)_nx:n\in \mathbb{N}_0\}$ is $\sigma(E,E^\prime)$-relatively compact, and therefore this sequence has a  $\sigma(E,E^\prime)$-cluster point $y\in E$. By Theorem~\ref{Eberlein} necessarily $y=\lim\limits_{n\to\infty}(AT)_nx$. We can define 
	\[Px:=\lim\limits_{n\to\infty}(AT)_nx\]
	for each $x\in E$. Since $((AT)_{n})_n$ is equicontinuous we have that $P\in\mathcal{L}(E)$, and $T$ is $A$-ergodic. 
\end{proof}

If the space is barrelled we have the Uniform Boundedness Principle at our disposal, and every $ A$-ergodic operator is $ A$-bounded. On the other hand, if the set $\{(AT)_nx:n\in \mathbb{N}_0\}$ is $\sigma(E,E^\prime)$-relatively compact, then it is bounded. If this happens for every $ x \in E$, again by the UBP, $AT$ is bounded in $ E$ and the operator $ T$ is $ A$-bounded. This altogether gives the following.

\begin{prop}\label{barrelled AE}
	Let $E$ be barrelled, $A$ a probability matrix, and $ T$ an operator satisfying \eqref{limit T invariant} for every $x\in E$. Then $T$ is $A$-ergodic if and only if $\{(AT)_nx:n\in \mathbb{N}_0\}$ is $\sigma(E,E^\prime)$-relatively compact for each $x\in E$.
\end{prop}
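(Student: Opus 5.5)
The plan is to reduce both implications to Corollary~\ref{AE coro}. Since $E$ is barrelled, the Uniform Boundedness Principle will supply the missing hypothesis of $A$-boundedness in each direction.

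For the forward implication, suppose $T$ is $A$-ergodic. Then for every $x\in E$ the sequence $((AT)_n x)_n$ converges in $E$ to some $Px$. A convergent sequence together with its limit is a compact set in the original topology, hence also $\sigma(E,E')$-compact. So $\{(AT)_nx:n\in\mathbb{N}_0\}$ is $\sigma(E,E')$-relatively compact. This direction needs neither barrelledness nor \eqref{limit T invariant}.

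For the converse, assume $\{(AT)_nx:n\in\mathbb{N}_0\}$ is $\sigma(E,E')$-relatively compact for each $x$. A $\sigma(E,E')$-relatively compact set is $\sigma(E,E')$-bounded, since each $v\in E'$ is continuous for the weak topology and so bounded on the weak closure, which is compact. By Mackey's theorem, weakly bounded sets are bounded in the original topology. Hence $\{(AT)_n\colon n\in\mathbb{N}_0\}$ is a pointwise bounded family of continuous linear operators on the barrelled space $E$. The Banach--Steinhaus theorem for barrelled spaces then says this family is equicontinuous, which is exactly the statement that $T$ is $A$-bounded. Now $T$ is $A$-bounded and satisfies \eqref{limit T invariant} for every $x$, so Corollary~\ref{AE coro} applies and yields that $T$ is $A$-ergodic.

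The only step needing care is the passage from weak relative compactness to equicontinuity. The points to check are that weak relative compactness gives weak boundedness, then boundedness via Mackey's theorem, and that the version of the Uniform Boundedness Principle used (pointwise bounded implies equicontinuous) is precisely the one valid in barrelled spaces. Everything else is a direct invocation of Corollary~\ref{AE coro}.
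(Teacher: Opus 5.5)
Your proof is correct and follows essentially the same route as the paper: you note that weakly relatively compact orbits are bounded, then apply the Uniform Boundedness Principle on the barrelled space $E$ to get $A$-boundedness, and finally invoke Corollary~\ref{AE coro}. Your observation that the forward implication needs neither barrelledness nor \eqref{limit T invariant}, and your explicit appeal to Mackey's theorem, make precise details the paper leaves implicit.
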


As we have just seen, the limit in \eqref{limit T invariant} plays an important role to describe when an $ A$-bounded operator is $ A$-ergodic. On the other hand  if $T$ is an $A$-ergodic operator then we clearly have 
\begin{equation}\label{restes AT}
	\lim_{n\to \infty}(AT)_{n+1}x-(AT)_nx=0 \,,
\end{equation}
for every $ x \in E$. Let us point out that, when $ A$ is the Cesàro matrix \eqref{sagrario}, these two conditions are equivalent. However, this is not always the case, take for instances  the operator $-\id:E\to E$ and the matrix 
\begin{equation*}
	A=
	\begin{pmatrix}
		1 & 0&0 & 0&0&\ldots \\
		1 &  0 & 0  &0& 0&\ldots \\
		\nicefrac{1}{2} & 0 &  \nicefrac{1}{2} &0&0& \ldots \\
		\nicefrac{1}{2} & 0 & \nicefrac{1}{2} &0&0& \ldots \\
		\nicefrac{1}{3} & 0 & \nicefrac{1}{3}  & 0 &\nicefrac{1}{3}  & \ldots\\
		\vdots & \vdots &\vdots & \vdots & \ddots
	\end{pmatrix}
\end{equation*}
Clearly $-\id$ is $A$-ergodic and satisfies \eqref{restes AT} for every $x\in E$ but it does not satisfy \eqref{limit T invariant} for any $x\neq 0$.

We rewrite now these two limits in \eqref{limit T invariant} and \eqref{restes AT} using matrices in order to obtain clean and manageable characterisations of this conditions. We define the matrix $\Delta:=(\delta_{n,i}-\delta_{n-1,i})$. That is
\begin{equation*}
	\Delta= 
	\begin{pmatrix}
		1 & 0&0 & \ldots \\
		-1 &  1 & 0  & \ldots \\
		0 & -1 &  1   & \ldots \\
		\vdots & \vdots & \vdots & \ddots
	\end{pmatrix}
\end{equation*}
Note that, for any sequence of operators $ \mathcal{T} $, we have
$ (\Delta \mathcal{T})_{n} = T_{n} - T_{n-1}$ for $ n\geq 1$ (and $ (\Delta \mathcal{T})_{0} = T_{0}$). Also, it is easy to check that this matrix is invertible, and its inverse is given by
\begin{equation*}
	\Delta^{-1}= 
	\begin{pmatrix}
		1 & 0&0 & \ldots \\
		1 &  1 & 0  & \ldots \\
		1 & 1 &  1   & \ldots \\
		\vdots & \vdots & \vdots & \ddots
	\end{pmatrix}
\end{equation*}

Let us see now how can we rewrite the limits in \eqref{limit T invariant} and \eqref{restes AT} by means of these matrices. 

\begin{nota}\label{Deltas}
	Observe in first place that, if $ T$ is any operator and $ A$ is a probability matrix, then (writing $ \mathcal{T} = \{(A T)_{n}  \}_{n}$)
	\begin{equation*}
		(\Delta A T)_{n+1} = (\Delta \mathcal{T})_{n+1} = (AT)_{n+1} - (AT)_n \,,
	\end{equation*}
	and $ T$ satisfies \eqref{restes AT} if and only if it is $\Delta A$-null.
\end{nota}

\begin{nota}\label{Deltas2}
	To handle the limit in \eqref{limit T invariant} we need to introduce an extra (mild) condition on the matrix, namely that the first column tends to $ 0$. Suppose then that $\lim_n a_{n0}=0$ and that a given operator $ T$ is $ A\Delta$-null. Then
	\begin{equation*}
		(A \Delta  T)_{n}= \big( A (\Delta T) \big)_{n}
		=a_{n0} \id +\sum_{i=1}^{n} a_{ni}(T^{i}-T^{i-1}) \,,
	\end{equation*}
	for every $ n \geq 1$. This gives the following identity
	\begin{equation*}
		T(A \Delta T)_{n} - a_{n0}\id= \sum_{i=0}^{n} a_{ni}(T^{i+1}-T^{i}) =(T-\id)(AT)_n\,.
	\end{equation*}
	Since $ T$ is continuous, this shows that  \eqref{limit T invariant} holds for every $ x \in E$.
\end{nota}

\begin{nota}
	If $ A$ is such that the pair $(\Delta A,A\Delta)$ satisfies \acc\!, then Proposition~\ref{schubert}--\ref{null to null} gives that every $ \Delta A$-null operator is $ A\Delta$-null. This means that the conditions \eqref{limit T invariant} and \eqref{restes AT} are, in some sense, equivalent (as it happens with the Cesàro matrix).
\end{nota}

Combining all the previous considerations we have the following.

\begin{teo}\label{coro Aergodic}
	Let $E$ be barrelled and $A$ a probability matrix such that $\lim_n a_{n0}=0$. Consider the following statements.
	\begin{enumerate}[(i)]
		\item \label{coro Aergodic1} $T$ is $A$-bounded and $A\Delta$-null, and the set $\{(AT)_nx:n\in \mathbb{N}_0\}$ is $\sigma(E,E^\prime)$-relatively compact.
		\item \label{coro Aergodic2} $T$ is $A$-ergodic.
		\item \label{coro Aergodic3} $T$ is $A$-bounded and $\Delta A$-null.
	\end{enumerate}
	Then~\ref{coro Aergodic1} implies~\ref{coro Aergodic2} and~\ref{coro Aergodic2} implies~\ref{coro Aergodic3}. Moreover, if the pair $(\Delta A,A\Delta)$ satisfies \acc\!, then \ref{coro Aergodic1} and~\ref{coro Aergodic2} are equivalent.
\end{teo}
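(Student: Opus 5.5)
The plan is to prove the two implications separately and then combine them with Proposition~\ref{schubert}--\ref{null to null} for the final equivalence. The relative compactness in \ref{coro Aergodic1} is read as holding for every $x\in E$.

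\emph{\ref{coro Aergodic1} implies \ref{coro Aergodic2}.} Since $\lim_n a_{n0}=0$ and $T$ is $A\Delta$-null, Remark~\ref{Deltas2} shows that \eqref{limit T invariant} holds for every $x\in E$. We have to check both limits there. Remark~\ref{Deltas2} gives $(T-\id)(AT)_n = T(A\Delta T)_n - a_{n0}\id$, and the right-hand side tends to $0$ pointwise because $T$ is continuous. For the other limit, we compute directly
\[
(AT)_n(T-\id)=\sum_{i=0}^n a_{ni}(T^{i+1}-T^i),
\]
which is the same operator since $T$ commutes with its powers. Now $T$ is $A$-bounded by hypothesis, and the orbits $\{(AT)_nx\}$ are $\sigma(E,E')$-relatively compact. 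So Corollary~\ref{AE coro} applies and gives that $T$ is $A$-ergodic. (Alternatively one can use Proposition~\ref{barrelled AE}, since $E$ is barrelled.)

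\emph{\ref{coro Aergodic2} implies \ref{coro Aergodic3}.} Because $E$ is barrelled, the uniform boundedness principle applied to the pointwise convergent sequence $((AT)_n)_n$ gives equicontinuity, so $T$ is $A$-bounded; this was already observed after the definitions. Pointwise convergence of $(AT)_nx$ also gives \eqref{restes AT}, that is, $(AT)_{n+1}x-(AT)_nx\to 0$. By Remark~\ref{Deltas} this is exactly the statement that $T$ is $\Delta A$-null.

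\emph{The equivalence under \acc.} Assume the pair $(\Delta A,A\Delta)$ satisfies \acc, and let $T$ be $A$-ergodic. By the previous step $T$ is $A$-bounded and $\Delta A$-null, and Proposition~\ref{schubert}--\ref{null to null} then makes $T$ also $A\Delta$-null. For each $x$ the sequence $(AT)_nx$ converges in $E$, so its range together with the limit is compact, and in particular $\sigma(E,E')$-relatively compact. This gives \ref{coro Aergodic1}, and together with the first implication yields the equivalence.

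The only step needing care is the first: checking that Remark~\ref{Deltas2} really yields \emph{both} limits in \eqref{limit T invariant}, which rests on the commutation of $T$ with $(AT)_n$, and being careful with the $n=0$ and $a_{n0}$ boundary terms. Everything else is direct invocation of earlier results.
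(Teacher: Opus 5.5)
Your proposal is correct and follows the paper's proof essentially step by step. You use Remark~\ref{Deltas2} with Corollary~\ref{AE coro} for the first implication, barrelledness plus Remark~\ref{Deltas} for the second, and Proposition~\ref{schubert}--\ref{null to null} to pass from $\Delta A$-null to $A\Delta$-null in the converse. Your explicit observation that $(AT)_n(T-\id)=(T-\id)(AT)_n$, so that both limits in \eqref{limit T invariant} follow, just fills in a detail the paper leaves implicit.
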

\begin{proof}
	To see that \ref{coro Aergodic1} implies~\ref{coro Aergodic2} we have that the limit in \eqref{limit T invariant} is satisfied for every $x\in E$ using Remark~\ref{Deltas2}. Now Corollary~\ref{AE coro} gives that $T$ is $A$-ergodic. 
	
	Now for \ref{coro Aergodic2} implies~\ref{coro Aergodic3} we have that $E$ is barrelled then $T$ is $A$-bounded. Remark~\ref{Deltas} gives that $T$ is $\Delta A$-null and \ref{coro Aergodic3} follows. If additionally the pair $(\Delta A,A\Delta)$ satisfies \acc\!, since $T$ is $\Delta A$-null Proposition~\ref{schubert} gives that it is also $A\Delta$-null. Again using Corollary~\ref{AE coro} we obtain \ref{coro Aergodic1}.
\end{proof}

Let us recall that, if the space $ E$ is semi-reflexive, then every bounded set is $\sigma(E,E^\prime)$-relatively compact. Hence, if $ T$ is an $ A$-bounded operator on a reflexive space, then the set $\{(AT)_nx:n\in \mathbb{N}_0\}$ is $\sigma(E,E^\prime)$-relatively compact. A barrelled, semi-reflexive locally convex space is called reflexive. This observation together with Theorem~\ref{coro Aergodic} give the following result.  

\begin{teo} \label{pini roma}
	Let $A$ be a probability matrix such that $\lim_n a_{n0}=0$ and that $(\Delta A,A\Delta)$ satisfies \acc\!. If $ E$ is reflexive, then an operator $ T$ on $ E$ is $A$-ergodic if and only if $T$ is $A$-bounded and $A\Delta$-null.
\end{teo}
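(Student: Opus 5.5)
The plan is to derive the statement directly from Theorem~\ref{coro Aergodic}, using reflexivity to supply the weak relative compactness hypothesis in its statement~\ref{coro Aergodic1}. Since $E$ is reflexive, it is in particular barrelled, so Theorem~\ref{coro Aergodic} applies: $A$ is a probability matrix with $\lim_n a_{n0}=0$, and by hypothesis the pair $(\Delta A, A\Delta)$ satisfies \acc\!. Hence, for any operator $T$ on $E$, conditions~\ref{coro Aergodic1} and~\ref{coro Aergodic2} of that theorem are equivalent.

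For the forward direction, assume $T$ is $A$-ergodic. By the equivalence just mentioned, \ref{coro Aergodic1} holds, and in particular $T$ is $A$-bounded and $A\Delta$-null. One can also argue this directly. Barrelledness and the uniform boundedness principle give $A$-boundedness. Remark~\ref{Deltas} gives that $T$ is $\Delta A$-null. Proposition~\ref{schubert}--\ref{null to null}, applied to the pair $(\Delta A,A\Delta)$ with property \acc\!, then gives that $T$ is $A\Delta$-null.

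For the converse, assume $T$ is $A$-bounded and $A\Delta$-null. For each $x\in E$, $A$-boundedness (equicontinuity of $\{(AT)_n\}_n$) shows that $\{(AT)_nx:n\in\mathbb{N}_0\}$ is bounded in $E$: for every continuous seminorm $p$ we have $p((AT)_nx)\le q(x)$ for all $n$. Since $E$ is semi-reflexive, bounded sets are $\sigma(E,E')$-relatively compact, as recalled just before the statement. Thus all of~\ref{coro Aergodic1} holds, and Theorem~\ref{coro Aergodic} gives~\ref{coro Aergodic2}, that is, $T$ is $A$-ergodic.

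There is no real obstacle here. The only points to check are that reflexive means barrelled and semi-reflexive, which is exactly how the paper defines it, and that equicontinuity of $A T$ yields boundedness of each orbit $\{(AT)_nx\}_n$. Both are immediate.
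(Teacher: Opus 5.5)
Your proof is correct and follows the paper's own argument. The paper obtains this result in the same way: it applies Theorem~\ref{coro Aergodic}, which is available because a reflexive space is barrelled, and observes that in a semi-reflexive space the bounded orbits $\{(AT)_nx:n\in\mathbb{N}_0\}$ of an $A$-bounded operator are $\sigma(E,E')$-relatively compact, exactly as you do.
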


We can now relate ergodicity for different matrices under certain conditions. As a consequence of Proposition~\ref{schubert} and Theorem~\ref{coro Aergodic} we have the following.

\begin{teo} \label{suppe}
	Let  $A, B$ be two probability matrices such that $\lim_n b_{n0}=0$ holds. Assume the pair $(A,B)$ satisfies \ac\! and the pair $(\Delta A,B\Delta)$ satisfies \acc\!. If $ E$ is reflexive, then every $A$-ergodic operator on $ E$ is $B$-ergodic.
\end{teo}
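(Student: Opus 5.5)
The plan is to reduce the statement to Theorem~\ref{pini roma}, or more directly to Theorem~\ref{coro Aergodic}, applied to the matrix $B$. Let $T$ be an $A$-ergodic operator on the reflexive space $E$. We need three facts: $T$ is $B$-bounded, $T$ is $B\Delta$-null, and each orbit $\{(BT)_nx\}$ is $\sigma(E,E')$-relatively compact. Since $B$ is a probability matrix with $\lim_n b_{n0}=0$, the implication \ref{coro Aergodic1}$\Rightarrow$\ref{coro Aergodic2} of Theorem~\ref{coro Aergodic} will then give that $T$ is $B$-ergodic. This implication needs only barrelledness, which holds because $E$ is reflexive.

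\textbf{Boundedness.} A reflexive space is barrelled, so the Uniform Boundedness Principle applies. Hence the $A$-ergodic operator $T$ is $A$-bounded; this is exactly the step \ref{coro Aergodic2}$\Rightarrow$\ref{coro Aergodic3} of Theorem~\ref{coro Aergodic}. The pair $(A,B)$ satisfies \ac, so Proposition~\ref{schubert}--\ref{madruga} shows that $T$ is $B$-bounded.

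\textbf{Relative compactness.} For each $x$, the set $\{(BT)_nx\colon n\in\mathbb{N}_0\}$ is bounded, since $BT$ is equicontinuous. Because $E$ is semi-reflexive, every bounded set is $\sigma(E,E')$-relatively compact, which gives the required compactness of the orbits.

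\textbf{Nullity, the main step.} By Remark~\ref{Deltas}, $A$-ergodicity of $T$ implies that $T$ is $\Delta A$-null. The pair $(\Delta A,B\Delta)$ satisfies \acc, so Proposition~\ref{schubert}--\ref{null to null} shows that $T$ is $B\Delta$-null. Remark~\ref{Deltas2}, applied with $B$ in place of $A$ (this is where $\lim_n b_{n0}=0$ is used), then gives condition \eqref{limit T invariant} for $B$ at every $x\in E$.

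With all the hypotheses of \ref{coro Aergodic1} verified for $B$, Corollary~\ref{AE coro} (or Theorem~\ref{coro Aergodic}) shows that $T$ is $B$-ergodic.

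I expect the only delicate point to be the bookkeeping in the nullity step. First, $(\Delta A)T$ should be read as the sequence $\Delta(AT)$, which Remark~\ref{fox240} justifies. Second, the matrix $C$ with $C\,\Delta A=B\Delta$ must be the one assumed in the hypothesis; $\Delta A$ need not be invertible, and that is why \acc is assumed outright rather than derived. Everything else is a direct chaining of the earlier results.
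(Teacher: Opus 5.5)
Your proposal is correct and is essentially the paper's argument, which obtains Theorem~\ref{suppe} directly from Proposition~\ref{schubert} and Theorem~\ref{coro Aergodic}. As in the paper, $A$-ergodicity on a barrelled space gives $A$-boundedness and $\Delta A$-nullity, and the two pair conditions transfer these to $B$-boundedness and $B\Delta$-nullity. Semi-reflexivity together with $\lim_n b_{n0}=0$ then lets you apply the implication (i)$\Rightarrow$(ii) of Theorem~\ref{coro Aergodic} to the matrix $B$.
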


In \cite[Proposition~3.3]{BoPaRi11} (see also \cite[Corollary 5.7]{llibre}) it is shown that every power bounded operator on a semi-reflexive space is mean ergodic. These are versions of the classical Yosida's Ergodic Theorem that when reformulated in terms of matrices (recall Remarks~\ref{identitat} and~\ref{cesaro}), this means that every $ I$-bounded operator is $ M$-ergodic.  With the same spirit we give now conditions on a pair of matrices that allow to pass from boundedness to ergodicity.
\begin{teo}\label{reflexive A Bergodic}
	Let  $A, B$ be two probability matrices such that $\lim_n b_{n0}=0$ holds. Assume the pair $(A,B)$ satisfies \ac\! and the pair $(A,B\Delta)$ satisfies \accc\!. If $ E$ is reflexive, then every $A$-bounded operator on $ E$ is $B$-ergodic.
\end{teo}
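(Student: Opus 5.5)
The plan is to reduce the statement to Theorem~\ref{coro Aergodic} applied to the matrix $B$, i.e.\ to the implication \ref{coro Aergodic1}$\Rightarrow$\ref{coro Aergodic2} there. That implication only requires $B$ to be a probability matrix with $\lim_n b_{n0}=0$, which is assumed, and $E$ to be barrelled, which holds because $E$ is reflexive. Fix an $A$-bounded operator $T$ on $E$. We have to check three things: that $T$ is $B$-bounded, that $T$ is $B\Delta$-null, and that each orbit $\{(BT)_nx:n\in\mathbb{N}_0\}$ is $\sigma(E,E')$-relatively compact.

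\textbf{Boundedness.} Since $(A,B)$ satisfies \ac\!, Proposition~\ref{schubert}--\ref{madruga} shows that $T$ (that is, the sequence of its powers) is $B$-bounded.

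\textbf{Nullity for $B\Delta$.} Since $(A,B\Delta)$ satisfies \accc\!, Proposition~\ref{schubert}--\ref{bdd to null} turns the $A$-boundedness of $T$ directly into $T$ being $B\Delta$-null. This step uses Remark~\ref{fox240}: if $C$ is the matrix with $CA=B\Delta$, then $(B\Delta)T$ equals $C(AT)$ termwise. One should check that $B\Delta$ is lower triangular, which is clear because both $B$ and $\Delta$ are. No probability assumption on $B\Delta$ is needed, since Proposition~\ref{schubert} is stated for arbitrary lower triangular matrices.

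\textbf{Relative compactness.} $B$-boundedness implies that $\{(BT)_nx:n\in\mathbb{N}_0\}$ is bounded for each $x$, because it is dominated by $q(x)$ for every seminorm $p$. Since $E$ is semi-reflexive, bounded sets are $\sigma(E,E')$-relatively compact, as recalled before Theorem~\ref{pini roma}.

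With all three conditions in place, Theorem~\ref{coro Aergodic}, \ref{coro Aergodic1}$\Rightarrow$\ref{coro Aergodic2}, yields that $T$ is $B$-ergodic. Unwinding that implication, Remark~\ref{Deltas2} uses $\lim_n b_{n0}=0$ and $B\Delta$-nullity to get \eqref{limit T invariant} for $B$ and every $x$, and Corollary~\ref{AE coro} then gives the conclusion.

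The main point to watch is the second step, namely that \accc really gives $B\Delta$-nullity with the right product order. It is also worth confirming that Remark~\ref{Deltas2}'s identity $(T-\id)(BT)_n=T(B\Delta T)_n-b_{n0}\id$ is valid for $B$ exactly as written. This identity needs nothing beyond $B$ being lower triangular, so I do not expect real obstacles; the proof should be a short assembly of earlier results.
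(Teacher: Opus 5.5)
Your proof is correct and follows essentially the argument the paper intends. The paper gives no written proof here; it presents the theorem, like Theorem~\ref{suppe}, as a consequence of Proposition~\ref{schubert} and Theorem~\ref{coro Aergodic}: Proposition~\ref{schubert} gives $B$-boundedness via \ac and $B\Delta$-nullity via \accc\!, reflexivity gives $\sigma(E,E')$-relative compactness of the orbits, and the implication (i)$\Rightarrow$(ii) of Theorem~\ref{coro Aergodic} applied to $B$ concludes, exactly as you do.
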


The previous results depend on conditions on $ \Delta A$ and/or $ B \Delta$, which in general may be difficult to check. We aim now at getting conditions that involve only the pair $ (A,B)$. We begin with a lemma that relates the limit in \eqref{limit T invariant} for two matrices.

\begin{lema} \label{albeniz}
	Let $ (A,B)$ be a pair of complex matrices satisfying \acc\!. Let $ T$ be an operator such that 
	\begin{equation*}
		\lim_{n} (T-\id) (AT)_{n} x = \lim_{n}  (AT)_{n} (T-\id) x = 0 
	\end{equation*}
	for every $ x \in E$. Then
	\begin{equation*}
		\lim_{n} (T-\id) (BT)_{n} x = \lim_{n}  (BT)_{n} (T-\id) x = 0 
	\end{equation*}
	for every $ x \in E$. 
\end{lema}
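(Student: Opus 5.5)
The natural route is to recast both limits in the hypothesis as nullity statements for suitable sequences of operators, and then apply Proposition~\ref{schubert}--\ref{null to null} to each of them. Fix a matrix $C$ with $CA=B$ that satisfies \eqref{astc} and \eqref{ast2c}.

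\emph{First limit.} Consider the sequence $\mathcal{T}=\{T^{n}\}_{n}$ and the auxiliary sequence
\[
\mathcal{R}=\{(T-\id)T^{n} \colon n\in\mathbb{N}_0\}.
\]
Because $T$ is linear and continuous, it commutes with finite linear combinations, so
\[
(A\mathcal{R})_{n}=\sum_{i=0}^{n}a_{ni}(T-\id)T^{i}=(T-\id)(AT)_{n},
\]
and likewise $(B\mathcal{R})_{n}=(T-\id)(BT)_{n}$. The hypothesis therefore says exactly that $\mathcal{R}$ is $A$-null. Proposition~\ref{schubert}--\ref{null to null} applies to arbitrary sequences of continuous operators, not only to powers, so $\mathcal{R}$ is $B$-null. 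That is, $(T-\id)(BT)_{n}x\to 0$ for every $x\in E$.

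\emph{Second limit.} Here I would use the sequence $\mathcal{R}'=\{T^{n}(T-\id)\}_{n}$. Its $A$-expected value is $(A\mathcal{R}')_{n}=(AT)_{n}(T-\id)$, and its $B$-expected value is $(BT)_{n}(T-\id)$. The same argument shows that $\mathcal{R}'$ is $B$-null. In fact $\mathcal{R}'=\mathcal{R}$, because $T$ commutes with its own powers, so one application of the proposition gives both conclusions at once.

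The argument has essentially no obstacle. The only point that needs care is confirming that Proposition~\ref{schubert}--\ref{null to null} is stated for general sequences $\mathcal{T}$, which it is. Beyond that, one should check the identity $(C(A\mathcal{R}))_{n}=(B\mathcal{R})_{n}$ from Remark~\ref{fox240}. It holds for any lower triangular matrices, so no probability assumption on $A$ or $B$ is needed, which matches the statement's phrase ``complex matrices''.
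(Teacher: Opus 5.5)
Your proof is correct, and it is essentially the paper's argument. The paper rewrites $(T-\id)(AT)_n x=\sum_{i=0}^{n}a_{ni}(T^{i+1}x-T^{i}x)$ and then repeats by hand the $\varepsilon$-splitting from the proof of Proposition~\ref{schubert}--\ref{null to null} for the sequence $(T^{i+1}-T^{i})_i$; you apply that proposition directly to $\mathcal{R}=\{(T-\id)T^{n}\}$, which is cleaner, and your observation $\mathcal{R}'=\mathcal{R}$ handles the second limit, which the paper leaves implicit.
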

\begin{proof}
	First of all let us denote by $ K_{C}$ the supremum in \eqref{astc}, and observe that
	\begin{equation} \label{pollini}
		(T-\id) (AT)_{n} x = (T - \id) \Big(\sum_{i=0}^{n} a_{ni}T^{i} x\Big)
		= \sum_{i=0}^{n} a_{ni} \big(T^{i+1} x - T^{i}x\big) \,.
	\end{equation}
	Fix some $ \varepsilon >0$. Given a seminorm $ p$ and $ x \in X$ we can find $ n_{0} \in \mathbb{N}_0$ so that
	\begin{equation*}
		p \Big(\sum_{i=0}^{n} a_{ni} \big(T^{i+1} x - T^{i}x\big)\Big) < \frac{\varepsilon}{K_{C}}
	\end{equation*}
	for every $ n \geq n_{0}$. Now, proceeding as in \eqref{pollini} and taking into account that $ B=CA$, and that $ a_{ji}=0$ whenever $ i >j$,  we have
	\begin{multline*}
		(T-\id) (BT)_{n} x 
		= \sum_{i=0}^{n} \sum_{j=0}^{n} c_{nj} a_{ji} \big(T^{i+1} x - T^{i}x\big) \\
		=  \sum_{j=0}^{n} c_{nj}  \sum_{i=0}^{n}a_{ji} \big(T^{i+1} x - T^{i}x\big) 
		=  \sum_{j=0}^{n} c_{nj}  \sum_{i=0}^{j}a_{ji} \big(T^{i+1} x - T^{i}x\big) \,.
	\end{multline*}
	If $ n \geq n_{0}$ then
	\begin{align*}
		p \big(& (T-\id) (BT)_{n} x \big) \\
		& \leq p\Big(\sum_{j=0}^{n_{0}} c_{nj}  \sum_{i=0}^{j}a_{ji} \big(T^{i+1} x - T^{i}x\big)\Big)
		+p\Big(\sum_{j=n_{0}+1}^{n} c_{nj}  \sum_{i=0}^{j}a_{ji} \big(T^{i+1} x - T^{i}x\big)\Big) \\
		& \leq \max_{1 \leq j \leq n_{0}} p\big( (T-\id) (AT)_{j} x\big)\sum_{j=0}^{n_{0}} \vert c_{nj} \vert
		+ \frac{\varepsilon}{K_{C}}\sum_{j=n_{0}+1}^{n} \vert c_{nj} \vert
		\leq K_{2} \sum_{j=0}^{n_{0}} \vert c_{nj} \vert + \varepsilon \,.
	\end{align*}
	Letting $ n \to \infty$ the first summand tends to $ 0$, so that
	\begin{equation*}
		q \big( (T-\id) (BT)_{n} x \big) < \varepsilon\,.
	\end{equation*}
	Since $ \varepsilon >0$ was arbitrary, this proves the claim.
\end{proof}

\begin{teo}\label{encina}
	Let  $A, B$  be two probability matrices such that $(A,B)$ satisfies \break\acc\!. If $ E$ is reflexive, then every  $A$-bounded operator satisfying \eqref{limit T invariant} (for the matrix $ A$ for every $ x \in E$) is $ B$-ergodic.
\end{teo}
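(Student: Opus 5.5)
Let $T$ be an $A$-bounded operator on the reflexive space $E$ satisfying \eqref{limit T invariant} for the matrix $A$ and every $x\in E$. The plan is to apply Corollary~\ref{AE coro} with the matrix $B$ in place of $A$. For this we need three facts about $T$: it is $B$-bounded, it satisfies \eqref{limit T invariant} for $B$ and every $x$, and each orbit $\{(BT)_nx:n\in\mathbb{N}_0\}$ is $\sigma(E,E')$-relatively compact.

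First, since $(A,B)$ satisfies \acc\!, it satisfies in particular \ac\!. Hence Proposition~\ref{schubert}--\ref{madruga} gives that $T$ (that is, the sequence of its powers) is $B$-bounded. Second, Lemma~\ref{albeniz} applies directly, because the pair satisfies \acc and the hypothesis on $T$ is exactly the one in the lemma. It yields
\[
\lim_n (T-\id)(BT)_nx=\lim_n (BT)_n(T-\id)x=0
\]
for every $x\in E$, which is \eqref{limit T invariant} for $B$. Third, $B$-boundedness makes the family $\{(BT)_n\}_n$ equicontinuous, so $\{(BT)_nx:n\in\mathbb{N}_0\}$ is bounded for each $x$. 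Since $E$ is reflexive, and hence semi-reflexive, bounded sets are $\sigma(E,E')$-relatively compact.

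With these three facts in hand, Corollary~\ref{AE coro} applies: $B$ is a probability matrix, $T$ is $B$-bounded, and it satisfies \eqref{limit T invariant} for $B$ at every $x$. Therefore $T$ is $B$-ergodic.

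The only delicate step is checking that Lemma~\ref{albeniz} really delivers both limits in \eqref{limit T invariant} for $B$. Its proof treats $(T-\id)(BT)_nx$ explicitly. For the second limit, note that $T$ commutes with every power of $T$, so $(T-\id)(BT)_n=(BT)_n(T-\id)$ and the two limits coincide.

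I expect the main obstacle to be the possibility that the authors intend a proof that avoids Corollary~\ref{AE coro}, for instance going through Theorem~\ref{coro Aergodic}, which needs $\lim_n b_{n0}=0$. That condition is not among the hypotheses here. The route above does not need it, because Corollary~\ref{AE coro} only asks for \eqref{limit T invariant}, which Lemma~\ref{albeniz} supplies directly. If I am worried about this point, I will simply double-check that Corollary~\ref{AE coro} uses no barrelledness beyond what reflexivity already provides.
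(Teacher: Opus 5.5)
Your proposal is correct and is essentially the paper's own proof. Both get $B$-boundedness from Proposition~\ref{schubert}, weak relative compactness of the orbits from (semi-)reflexivity, \eqref{limit T invariant} for $B$ from Lemma~\ref{albeniz}, and then conclude with Corollary~\ref{AE coro}; your remark that $T$ commutes with $(BT)_n$, so the two limits in \eqref{limit T invariant} coincide, is a correct and harmless addition that the paper leaves implicit.
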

\begin{proof}
	To begin with, since $ T$ is $ A$-bounded and $ (A,B)$ satisfies \ac\!, by Proposition~\ref{schubert}, it is also $ B$ -bounded. Then the set $\{(BT)_nx:n\in \mathbb{N}_0\}$ is bounded for every $x\in E$ and (being $ E$ reflexive) $\sigma(E,E^\prime)$-relatively compact. Finally, Lemma~\ref{albeniz} shows that \eqref{limit T invariant} holds for the matrix $ B$ for every $ x \in E$, and Corollary~\ref{AE coro} gives the conclusion.
\end{proof}

\begin{nota}
	Suppose $ E$ is a Montel space, and $ T$ is an $ A$-ergodic operator. Then, the sequence  $\{(A T)_n:n\in \mathbb{N}_0\}$  converges pointwise to some operator $ P$. On the other hand, $\{(AT)_n:n\in \mathbb{N}_0\}$ is equicontinuous and, by  \cite[\S39-4-(2), p.~139]{K2}, the topology of pointwise convergence and of uniform convergence on precompact sets coincide on this set. Finally, since $E$ is Montel, bounded sets and relatively compact sets coincide. This altogether shows that  $(AT)_n$ converges to $P$ uniformly on bounded sets of $E$, and $T$ is uniformly $A$-ergodic.
\end{nota}

This gives the following analogues of Theorems~\ref{pini roma} and~\ref{reflexive A Bergodic} (see \cite[p.~917]{BoPaRi11} and \cite[Proposition~5.11]{llibre})

\begin{prop}
	Let $A$ be a probability matrix such that $\lim_n a_{n0}=0$ and that $(\Delta A,A\Delta)$ satisfies \acc\!. If $ E$ is Montel, then an operator $ T$ on $ E$ is uniformly $A$-ergodic if and only if $T$ is $A$-bounded and $A\Delta$-null.
\end{prop}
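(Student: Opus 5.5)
The plan is to reduce the statement to Theorem~\ref{pini roma} together with the preceding Remark on Montel spaces. Every Montel space is barrelled, and closed bounded sets are compact. In particular bounded sets are $\sigma(E,E')$-relatively compact, so $E$ is semi-reflexive. Being barrelled and semi-reflexive, $E$ is reflexive, and Theorem~\ref{pini roma} applies. Under the hypotheses on $A$, it gives that $T$ is $A$-ergodic if and only if $T$ is $A$-bounded and $A\Delta$-null.

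For the forward implication, assume $T$ is uniformly $A$-ergodic, so $(AT)_n\to P$ uniformly on bounded sets. Singletons are bounded, hence $(AT)_nx\to Px$ for every $x$, and $T$ is $A$-ergodic. Theorem~\ref{pini roma} then yields that $T$ is $A$-bounded and $A\Delta$-null. Alternatively, $A$-boundedness follows directly from the barrelledness of $E$ via the uniform boundedness principle, and $A\Delta$-nullity comes from Theorem~\ref{coro Aergodic}, using that $(\Delta A,A\Delta)$ satisfies \acc\ so that $\Delta A$-null implies $A\Delta$-null.

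For the converse, assume $T$ is $A$-bounded and $A\Delta$-null. Theorem~\ref{pini roma} gives that $T$ is $A$-ergodic, with pointwise limit $P$. Then I invoke the Remark preceding the statement. The family $\{(AT)_n\}$ is equicontinuous by $A$-boundedness, and on equicontinuous sets pointwise convergence coincides with uniform convergence on precompact sets \cite[\S39-4-(2)]{K2}. In a Montel space bounded sets are relatively compact, hence precompact. So $(AT)_n\to P$ uniformly on bounded sets, that is, $T$ is uniformly $A$-ergodic.

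The only delicate point is checking that the Montel hypothesis gives everything needed: barrelledness, which is part of the definition, and semi-reflexivity, which follows since bounded sets are relatively compact, hence weakly relatively compact. I also need the equicontinuity used in the Remark, which comes from $A$-boundedness; that is exactly where barrelledness, or the assumed $A$-boundedness, enters. Beyond this, the proof is an assembly of results already established.
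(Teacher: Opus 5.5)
Your proposal is correct and follows the paper's own route. The paper gives no separate proof: it presents this proposition as an immediate consequence of Theorem~\ref{pini roma}, applied because Montel spaces are reflexive, combined with the preceding Remark. That Remark upgrades $A$-ergodicity to uniform $A$-ergodicity using the equicontinuity of $\{(AT)_n\}$, the coincidence of pointwise convergence and uniform convergence on precompact sets for equicontinuous families, and the relative compactness of bounded sets in a Montel space.
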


\begin{prop} \label{semiMontel UAE}
	Let  $A, B$ be two probability matrices such that $\lim_n b_{n0}=0$ holds. Assume the pair $(A,B)$ satisfies \ac\! and the pair $(A,B\Delta)$ satisfies \accc\!. If $ E$ is Montel, then every $A$-bounded operator on $ E$ is uniformly $B$-ergodic.
\end{prop}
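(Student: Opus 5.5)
The plan is to deduce Proposition~\ref{semiMontel UAE} from Theorem~\ref{reflexive A Bergodic} and the preceding Remark on Montel spaces. First we check that the hypotheses of Theorem~\ref{reflexive A Bergodic} hold. A Montel space is by definition barrelled, and every bounded set in it is relatively compact, hence relatively $\sigma(E,E')$-compact. So $E$ is semi-reflexive and barrelled, that is, reflexive. The matrix assumptions of the proposition are exactly those of Theorem~\ref{reflexive A Bergodic}: $A,B$ are probability matrices, $\lim_n b_{n0}=0$, the pair $(A,B)$ satisfies \ac\!, and the pair $(A,B\Delta)$ satisfies \accc\!. Therefore every $A$-bounded $T$ on $E$ is $B$-ergodic.

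If one wants to see this step directly rather than quote Theorem~\ref{reflexive A Bergodic}, we would argue as follows.
\begin{itemize}
\item Proposition~\ref{schubert}--\ref{madruga} gives that $T$ is $B$-bounded.
\item Proposition~\ref{schubert}--\ref{bdd to null} gives that $T$ is $B\Delta$-null.
\item Remark~\ref{Deltas2}, applied to $B$ with $\lim_n b_{n0}=0$, yields \eqref{limit T invariant} for $B$ and every $x$.
\item $B$-boundedness makes each orbit $\{(BT)_nx\}$ bounded, hence weakly relatively compact, and Corollary~\ref{AE coro} gives $B$-ergodicity.
\end{itemize}

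Second, we upgrade to uniform convergence exactly as in the preceding Remark. The sequence $(BT)_n$ converges pointwise to some $P$ and, by $B$-boundedness, is equicontinuous. On equicontinuous sets, pointwise convergence coincides with uniform convergence on precompact sets \cite[\S39-4-(2)]{K2}. In a Montel space bounded sets are relatively compact, hence precompact. So $(BT)_n\to P$ uniformly on bounded sets, and $T$ is uniformly $B$-ergodic.

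The main obstacle, if any, is the first step, namely verifying that Theorem~\ref{reflexive A Bergodic} (whose proof is not displayed) really holds under these hypotheses. The direct argument above covers it, and the only delicate point is the use of Remark~\ref{Deltas2} to obtain \eqref{limit T invariant} from $B\Delta$-nullity. The rest is bookkeeping.
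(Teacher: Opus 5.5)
Your proposal is correct and follows the same route as the paper. Montel implies reflexive, so Theorem~\ref{reflexive A Bergodic} gives $B$-ergodicity, and the remark preceding the proposition then upgrades this to uniform $B$-ergodicity: $B$-boundedness gives equicontinuity, on equicontinuous sets pointwise convergence agrees with uniform convergence on precompact sets, and bounded sets are relatively compact. Your unpacking of Theorem~\ref{reflexive A Bergodic} through Proposition~\ref{schubert}, Remark~\ref{Deltas2} and Corollary~\ref{AE coro} is also sound, and it only needs semi-Montel, as the paper's closing remark observes.
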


\begin{nota}
	In Theorems~\ref{reflexive A Bergodic} and~\ref{encina}, one can replace the condition that $E$ is reflexive with the weaker assumption that $E$ is semi-reflexive. This is possible since the operators are assumed to be $A$-bounded and the barrelledness of $E$ is no longer required. In the same spirit ``Montel'' can be replaced by ``semi-Montel'' in Proposition~\ref{semiMontel UAE}. We omit these additional results to avoid redundancy.
\end{nota}

\section{Variants of classical notions}

Our aim now is to relate some classical notions (power and Cesàro boundedness, and  mean ergodicity) with other ones defined by other matrices. Recall the power boundedness is defined by the $ \id$, and Cesàro boundedness and mean ergodicity by the matrix $ M$ from \eqref{sagrario}. \\
For each fixed $ p \in \mathbb{R}$ we define $ S(n,p) = \sum_{i=1}^{n} i^{p}$ for $n \in \mathbb{N}$ and consider the matrix $M_{p} = \Big(\frac{i^{p}}{S(n,p)}\Big)_{1 \leq i \leq n}$, where the elements with $ i>n$ are $ 0$. That is
\begin{equation}\label{marais}
M_{p}= 
\begin{pmatrix}
1 & 0&0 &0 & \ldots \\
\frac{1}{S(2,p)} &  \frac{2^{p}}{S(2,p)}  & 0 & 0 & \ldots \\
\frac{1}{S(3,p)} & \frac{2^{p}}{S(3,p)} &  \frac{3^{p}}{S(3,p)}  &0 & \ldots \\
\frac{1}{S(4,p)} & \frac{2^{p}}{S(4,p)} &  \frac{3^{p}}{S(4,p)}  &  \frac{4^{p}}{S(4,p)}  & \ldots \\
\vdots & \vdots & \vdots & \vdots & \ddots
\end{pmatrix}
\end{equation}
For each $ n \in \mathbb{N}$, the sum of the $ n$-th row is $  \sum_{i=1}^{n}\frac{i}{S(n,i)}=1 $;  then $ M_{p}$ is a probability matrix. Note that $ M_{0}$ is exactly \eqref{sagrario}.\\
In a similar fashion we define $S(n,\exp)=\sum_{i=1}^{n}e^i$ for $n\in\mathbb{N}$ and consider the matrix $M_{\exp} =\Big(\frac{e^i}{S(n,\exp)}\Big)_{1 \leq i \leq n}$, where the elements with $ i>n$ are $ 0$. That is
\begin{equation*}
	M_{\exp}= 
	\begin{pmatrix}
		1 & 0&0 &0 & \ldots \\
		\frac{e}{S(2,\exp)} &  \frac{e^2}{S(2,\exp)}  & 0 & 0 & \ldots \\
		\frac{e}{S(3,\exp)} & \frac{e^2}{S(3,\exp)} &  \frac{e^3}{S(3,\exp)}  &0 & \ldots \\
		\frac{e}{S(4,\exp)} & \frac{e^2}{S(4,\exp)} &  \frac{e^3}{S(4,\exp)}  &  \frac{e^4}{S(4,\exp)}  & \ldots \\
		\vdots & \vdots & \vdots & \vdots & \ddots
	\end{pmatrix}
\end{equation*}
Again $ M_{\exp}$ is a probability matrix.\\
Given an operator $ T$ and $ n \in \mathbb{N}$ we have (see \eqref{panadera})
\begin{equation*}
(M_{p}T)_{n} = \frac{1}{S(n,p)} \sum_{k=1}^{n} k^{p} T^{k-1} \,,
\end{equation*}
and 
\begin{equation*}
	(M_{\exp}T)_{n} = \frac{1}{S(n,\exp)} \sum_{k=1}^{n} e^k T^{k-1} \,.
\end{equation*}
What we are going to do now is to relate the ergodic properties defined by these matrices with the classical ones of power boundedness and Cesàro boundedness/ergodicity.  Our strategy is, then to relate (in terms of \ac and \acc\!) these matrices with $\id$ and $ M$ from \eqref{sagrario}. Before we proceed, let us establish some basic properties of these matrices.

\begin{nota} \label{aretino}
For each $ p \in \mathbb{R} $ the matrix $ M_{p}$ is invertible, and its inverse is the matrix $ D_{p}= \big(d_{ij} \big)$, where 
\begin{itemize}
\item $ d_{nn} = \frac{S(n,p)}{n^{p}}$ for $ n \geq 1$, 
\item $ d_{n \, (n-1)} = - \frac{S(n-1,p)}{n^{p}}$ for $ n \geq 2$, 
\item $ 0$ otherwise
\end{itemize}
That is, 
\begin{equation*}
M_{p}^{-1} = D_{p} = 
\begin{pmatrix}
\frac{S(1,p)}{1^{p}} & 0&0 &0 & \ldots \\
- \frac{S(1,p)}{2^{p}} & \frac{S(2,p)}{2^{p}}&0 &0 & \ldots \\
0 & - \frac{S(2,p)}{3^{p}} & \frac{S(3,p)}{3^{p}}&0 & \ldots \\
0&0 & - \frac{S(3,p)}{4^{p}} & \frac{S(4,p)}{4^{p}}& \ldots \\
\vdots & \vdots & \vdots & \vdots & \ddots
\end{pmatrix}
\end{equation*}
This can be seen by direct computation. If we write $ M_{p}=(m_{ij})$ , then the $ (n\, i)$ element of the product $ D_{p} M_{p}$ is given by
\begin{equation} \label{joan}
\sum_{j=1}^{n} d_{nj} m_{ji}= - \frac{S(n-1,p)}{n^{p}} m_{(n-1) \, i} +  \frac{S(n,p)}{n^{p}} m_{n i} 
\end{equation} 
Now, if $ i>n$, then $ m_{(n-1) \, i} = m_{n i}  =0$, and the sum is $ 0$. If $ i <n$, then 
\begin{multline*}
- \frac{S(n-1,p)}{n^{p}} m_{(n-1) \, i} +  \frac{S(n,p)}{n^{p}} m_{n i}\\
= - \frac{S(n-1,p)}{n^{p}} \frac{i^{p}}{S(n-1,p)}  +  \frac{S(n,p)}{n^{p}} \frac{i^{p}}{S(n,p)} =0\,.
\end{multline*} 
Finally, for $ i=n$ we have $ m_{(n-1)\, n}=0$ and $ m_{nn} =  \frac{n^{p}} {S(n,p)} $, so that the sum in \eqref{joan} is $ 1$.\\
Similar computations show that
\begin{equation*}
	M_{\exp}^{-1} = 
	\begin{pmatrix}
		\frac{S(1,\exp)}{e} & 0&0 &0 & \ldots \\
		- \frac{S(1,\exp)}{e^2} & \frac{S(2,\exp)}{e^2}&0 &0 & \ldots \\
		0 & - \frac{S(2,\exp)}{e^3} & \frac{S(3,\exp)}{e^3}&0 & \ldots \\
		0&0 & - \frac{S(3,\exp)}{e^4} & \frac{S(4,\exp)}{e^4}& \ldots \\
		\vdots & \vdots & \vdots & \vdots & \ddots
	\end{pmatrix}
\end{equation*}

\end{nota}

\begin{nota} \label{heiland}
We also need to get some bounds for $ S(n,p)$. Standard computations show that 
\begin{equation*}
S(n,p) \sim \begin{cases}
1 & \text{ if } p<-1 \\ \log n &\text{ if } p=-1\\ 
n^{p+1} &\text{ if } p> -1
\end{cases}
\end{equation*}
\end{nota}

\subsection{Power boundedness}

Our next step is to see when the pairs formed by these matrices and the identity satisfy \ac or \acc\!.

\begin{prop} \label{bartleby}
\phantom{text}
\begin{enumerate}[(i)]
\item\label{bartlebya} The pair $ (\id , M_{p})$ satisfies \ac for every $ p \in \mathbb{R}$\,.

\item\label{bartlebyb}  The pair $ ( M_{p}, \id)$ does not satisfy \ac for any $ p \in \mathbb{R}$\,.

\item\label{vendettaa} The pair $ (\id , M_{\exp})$ satisfies \acc\!.

\item\label{vendettab}  The pair $ ( M_{\exp}, \id)$ satisfies \acc\!.
\end{enumerate}
\end{prop}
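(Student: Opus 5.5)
The plan is to use the fact that the identity matrix is invertible, so in every case the only possible $C$ with $CA=B$ is $C=BA^{-1}$. For the pairs $(\id,M_p)$ and $(\id,M_{\exp})$ this gives $C=M_p$ and $C=M_{\exp}$. For the pairs $(M_p,\id)$ and $(M_{\exp},\id)$ it gives $C=M_p^{-1}=D_p$ and $C=M_{\exp}^{-1}$, whose entries are written out in Remark~\ref{aretino}. Each item then reduces to estimating the row sums $\sum_i |c_{ni}|$ and, for \acc\!, the columns $c_{ni}$ as $n\to\infty$.

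For \ref{bartlebya} and \ref{vendettaa}, $C$ is a probability matrix, so every row sum equals $1$ and \eqref{astc} holds with $K_C=1$. This already settles \ref{bartlebya}. For \ref{vendettaa} we must also check \eqref{ast2c}. For fixed $i$ we have $c_{ni}=e^i/S(n,\exp)$, and $S(n,\exp)=\frac{e(e^n-1)}{e-1}\to\infty$, so the columns tend to $0$.

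For \ref{vendettab}, the $n$-th row of $M_{\exp}^{-1}$ has at most two nonzero entries. Its absolute row sum is
\[
\frac{S(n,\exp)+S(n-1,\exp)}{e^n}.
\]
Using the closed form of $S(n,\exp)$, this converges to $\frac{e}{e-1}+\frac{1}{e-1}=\frac{e+1}{e-1}$. Being a convergent sequence, it is bounded, so \eqref{astc} holds. For \eqref{ast2c}, a fixed column $i$ of this bidiagonal matrix has nonzero entries only in rows $i$ and $i+1$, so $c_{ni}=0$ for $n\geq i+2$ and the column tends to $0$ trivially.

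For \ref{bartlebyb}, the absolute row sum of $D_p$ is
\[
\frac{S(n,p)+S(n-1,p)}{n^p}\geq \frac{S(n,p)}{n^p},
\]
and the goal is to show this is unbounded for every $p$. I will split into two cases rather than rely on the asymptotics of Remark~\ref{heiland}, because the case $p\le -1$ needs a separate argument there.
\begin{itemize}
\item If $p<0$, then $i^p\ge n^p$ for $1\le i\le n$, so $S(n,p)\ge n\cdot n^p$ and the ratio is at least $n$.
\item If $p\ge 0$, keep only the terms with $i\ge n/2$. There are at least $n/2$ of them, each at least $(n/2)^p$, so $S(n,p)\ge n^{p+1}/2^{p+1}$ and the ratio is at least $n/2^{p+1}$.
\end{itemize}
In both cases $\sup_n\sum_i|c_{ni}|=\infty$, so \ac fails for every $p\in\mathbb{R}$.

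The only step requiring any care is this lower bound in \ref{bartlebyb}, valid uniformly in $p$; the two-case split above is meant to handle it. The remaining items are direct computations from the explicit forms of the matrices in Remark~\ref{aretino}. One should also keep the indexing (starting at $1$ in this section) consistent with those formulas.
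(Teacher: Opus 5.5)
Your proposal is correct and follows the paper's proof: in each case you take $C=BA^{-1}$ and read off the row sums (and, for \acc the columns) of $M_p$, $M_{\exp}$, $M_p^{-1}$ and $M_{\exp}^{-1}$ from Remark~\ref{aretino}. The differences are only in the estimates. In (ii) your elementary bound $S(n,p)/n^p=\sum_{i=1}^n (i/n)^p$, which is $\geq n$ for $p<0$ and $\geq n/2^{p+1}$ for $p\geq 0$, replaces the three-case asymptotics of Remark~\ref{heiland}; in (iv) the closed form of the geometric sum replaces the integral comparison. As a small wording point, the uniqueness of $C$ that the negative statement (ii) needs comes from the invertibility of $M_p$ (its diagonal is nonzero), not from that of $\id$.
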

\begin{proof}
Since $ M_{p} \id = M_{p}$  and each row sums up to $ 1$,  \eqref{astc} is clearly satisfied and we have \ref{bartlebya}.\\
Fix now $ p \in \mathbb{R}$ and note that, since $ M_{p}$ is invertible, in order to see \ref{bartlebyb}, we need to check that $ M_{p}^{-1}$ does not satisfy  \eqref{astc}. The sum appearing there is in this case (see Remark~\ref{aretino})
\begin{equation*}
\frac{S(n,p)}{n^{p}} + \frac{S(n-1,p)}{n^{p}} 
= 1+  \frac{2S(n-1,p)}{n^{p}} \,.
\end{equation*}
Now, Remark~\ref{heiland} gives us what we need to conclude the argument.
\begin{equation*}
\frac{2S(n-1,p)}{n^{p}} \gg \begin{cases}
\dfrac{1}{n^{p}} & \text{ if }p < -1 \\ & \\
n \log(n-1)& \text{ if }p = -1 \\ & \\
 \Big(1- \dfrac{1}{n}\Big)^{p} (n-1) & \text{ if }p > -1
\end{cases}
\end{equation*}
In each case, the lower bound tends to  $ \infty $ as $ n \to \infty$. This gives the conclusion.\\
Since $M_{\exp}\id= M_{\exp}$ and each row sums up to $1$, \eqref{astc} is satisfied. We have that $\frac{e^k}{S(n,\exp)}\to 0$ as $n\to\infty$, for every $k\in\mathbb{N}$, thus we obtain \eqref{ast2c} and \ref{vendettaa} follows.\\
Finally to see \ref{vendettab}, we only need to check that $M^{-1}_{\exp}$ satisfies \eqref{astc} since \eqref{ast2c} is clear. We have that 
	\begin{equation*}
		S(n,\exp)<\int_{1}^{n+1} e^x\dx =e^{n+1}-e\,,
	\end{equation*}
	for every $n\in\mathbb{N}$. Then we can bound the sum for the matrix $M^{-1}_{\exp}$ as follows
	\begin{align*}
		\frac{S(n,\exp)}{e^{n+1}}  + \frac{S(n+1,\exp)}{e^{n+1}} < \frac{e^{n+1}-e}{e^{n+1}} + \frac{e^{n+2}-e}{e^{n+1}}< 1 + e\,,
	\end{align*}
	for every $n\in\mathbb{N}$, and \ref{vendettab} holds.
\end{proof}

As a straightforward consequence of this and Theorem~\ref{cohen} we have

\begin{prop}
Let $ T$ be an operator on a  lcHs, then
\begin{enumerate}[(i)]
\item if $ T$ is  power bounded, then it is $ M_{p}$-bounded for every $ p \in \mathbb{R}$.

\item  $ T$ is  power bounded if and only if it is $M_{\exp}$-bounded.
\end{enumerate}
\end{prop}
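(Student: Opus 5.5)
The plan is to read both statements off Proposition~\ref{bartleby} combined with Proposition~\ref{schubert}--\ref{madruga}, using Remark~\ref{identitat}: for a single operator $T$, being power bounded is exactly being $\id$-bounded, that is, $I$-bounded.

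For the first item, fix $p\in\mathbb{R}$. By Proposition~\ref{bartleby}--\ref{bartlebya}, the pair $(\id,M_p)$ satisfies \ac\ with the witness $C=M_p$, since $M_p\id=M_p$. Proposition~\ref{schubert}--\ref{madruga} then says that every $\id$-bounded sequence $\mathcal{T}$ on a lcHs is $M_p$-bounded. Applying this to $\mathcal{T}=(T^n)_n$ gives the claim directly.

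For the second item, the implication ``power bounded $\Rightarrow$ $M_{\exp}$-bounded'' goes the same way. By Proposition~\ref{bartleby}--\ref{vendettaa}, the pair $(\id,M_{\exp})$ satisfies \acc, and in particular \ac. For the converse I will use Proposition~\ref{bartleby}--\ref{vendettab}: the pair $(M_{\exp},\id)$ satisfies \acc, hence \ac, with witness $C=M_{\exp}^{-1}$, since $M_{\exp}^{-1}M_{\exp}=\id$. Proposition~\ref{schubert}--\ref{madruga} then shows that every $M_{\exp}$-bounded sequence is $\id$-bounded. Applied to $(T^n)_n$, this says an $M_{\exp}$-bounded $T$ is power bounded. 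Alternatively, since $M_{\exp}$ is invertible, one can cite the equivalence (i)$\Leftrightarrow$(ii) of Theorem~\ref{cohen}.

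The only point to check is indexing. The matrices $M_p$ and $M_{\exp}$ are indexed from $1$, and their $n$-th row acts as $\sum_{k=1}^n m_{nk}T^{k-1}$, so the relevant sequence is $T_k=T^{k-1}$. This is a relabelling of $(T^n)_{n\ge 0}$, so $\id$-boundedness of the shifted sequence is still equicontinuity of the powers of $T$. Beyond this bookkeeping I expect no real obstacle: all the analytic work (the bounds on row sums of $M_{\exp}^{-1}$) has already been done in Proposition~\ref{bartleby}.
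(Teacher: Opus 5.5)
Your proof is correct and is essentially the paper's argument. The paper deduces the statement directly from Proposition~\ref{bartleby} and Theorem~\ref{cohen}, and the implication (i)$\Rightarrow$(ii) of that theorem is exactly Proposition~\ref{schubert}--\ref{madruga}, which you apply to $(T^n)_n$ with the witnesses $C=M_p$, $C=M_{\exp}$ and $C=M_{\exp}^{-1}$ (your indexing remark $T_k=T^{k-1}$ is a harmless relabelling the paper leaves implicit).
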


We can also describe when the powers of an operator converge pointwise in terms of $ M_{\exp}$.

\begin{prop}\label{cuidate}
	Let $ X$ be a reflexive lcHs and $ T: X \to X$  a continuous, linear operator. Then the following statements are equivalent.
	\begin{enumerate}[(i)]
		\item \label{cuidate1} The sequence of iterates $(T^n)_n$ converges pointwise to some operators in $X$.
		\item \label{cuidate2} $ T$  is $\id$-ergodic.
		\item \label{cuidate3} $ T$  is $ M_{\exp}$-ergodic.
	\end{enumerate}
\end{prop}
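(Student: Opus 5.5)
The equivalence of \ref{cuidate1} and \ref{cuidate2} is just Remark~\ref{identitat}: $(IT)_{n}=T^{n}$, so being $\id$-ergodic means exactly that $(T^{n})_{n}$ converges pointwise to an operator. The real content is \ref{cuidate2}$\Leftrightarrow$\ref{cuidate3}. I plan to get it from Proposition~\ref{bartleby}, parts \ref{vendettaa} and \ref{vendettab}, combined with Proposition~\ref{schubert}--\ref{null to null}. The point is that Proposition~\ref{schubert} only transfers nullness, not ergodicity, so I will reduce ergodicity to nullness by subtracting the limit operator.

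The key observation is the following. Let $A$ be a probability matrix, $P$ an operator, and $\mathcal{R}=(T^{n}-P)_{n}$. Since each row of $A$ sums to $1$,
\begin{equation*}
(A\mathcal{R})_{n}=\sum_{i=0}^{n}a_{ni}(T^{i}-P)=(AT)_{n}-P
\end{equation*}
for every $n$. Hence $T$ is $A$-ergodic with limit $P$ if and only if $\mathcal{R}$ is $A$-null. Here $\mathcal{R}$ is an honest sequence of continuous linear operators, so Proposition~\ref{schubert} applies to it. I will apply this identity with $A=I$ and with $A=M_{\exp}$, both of which are probability matrices.

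For \ref{cuidate2}$\Rightarrow$\ref{cuidate3}, assume $T^{n}x\to Px$ for every $x$. Then $\mathcal{R}$ is $I$-null. The pair $(\id,M_{\exp})$ satisfies \acc by Proposition~\ref{bartleby}--\ref{vendettaa}, so Proposition~\ref{schubert}--\ref{null to null} makes $\mathcal{R}$ $M_{\exp}$-null. That is, $(M_{\exp}T)_{n}x\to Px$, and $T$ is $M_{\exp}$-ergodic. For \ref{cuidate3}$\Rightarrow$\ref{cuidate2}, assume $(M_{\exp}T)_{n}x\to Px$. Then $\mathcal{R}$ is $M_{\exp}$-null. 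The pair $(M_{\exp},\id)$ satisfies \acc by Proposition~\ref{bartleby}--\ref{vendettab} (with $C=M_{\exp}^{-1}$), so $\mathcal{R}$ is $I$-null, i.e.\ $T^{n}x\to Px$. In both directions the limit operator $P$ is the same one, and it is continuous because that is part of the hypothesis. Alternatively, reflexive spaces are barrelled, so the Banach--Steinhaus theorem also gives continuity of the pointwise limit.

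I expect no serious obstacle; reflexivity seems to be needed only, if at all, for that continuity remark. The step that needs the most care is bookkeeping. The displayed formula $(M_{\exp}T)_{n}=\frac{1}{S(n,\exp)}\sum_{k}e^{k}T^{k-1}$ uses shifted indices, so I must check that the identity $(A\mathcal{R})_{n}=(AT)_{n}-P$ is used with the same index convention as in Proposition~\ref{bartleby}. Only the facts that each row sums to $1$ and that $C=M_{\exp}^{-1}$ or $C=M_{\exp}$ satisfies \eqref{astc} and \eqref{ast2c} are used, and these are insensitive to the shift.
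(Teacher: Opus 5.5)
Your proof is correct, and it takes a different route from the paper's.

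\textbf{The paper's route.} The paper gets both nontrivial implications from Theorem~\ref{encina}, that is, from the Eberlein-type machinery of Theorem~\ref{Eberlein} and Corollary~\ref{AE coro}. For \ref{cuidate2}$\Rightarrow$\ref{cuidate3} it notes that $T^{n+1}x-T^{n}x\to 0$ gives \eqref{limit T invariant} for the matrix $\id$. For \ref{cuidate3}$\Rightarrow$\ref{cuidate2} it verifies \eqref{limit T invariant} for $M_{\exp}$ by a direct computation based on $S(n+1,\exp)/(e\,S(n,\exp))\to 1$. In both directions reflexivity supplies the weak relative compactness of the orbits $\{(AT)_{n}x\}$. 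Barrelledness is used implicitly to get the $A$-boundedness that Theorem~\ref{encina} requires.

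\textbf{Your route.} You subtract the limit operator. Since the rows sum to $1$, ergodicity with limit $P$ becomes nullness of $(T^{n}-P)_{n}$. Then Proposition~\ref{schubert}--\ref{null to null}, combined with Proposition~\ref{bartleby}--\ref{vendettaa} and \ref{vendettab}, is a Silverman--Toeplitz argument with no compactness involved.

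\textbf{What each buys.} Your route is more general. The equivalence holds in every lcHs, so reflexivity is superfluous. The same reasoning also shows that whenever $A$ and $B$ are probability matrices and $(A,B)$ satisfies \acc\!, every $A$-ergodic sequence is $B$-ergodic with the same limit. The paper's route is built for situations where one only knows $A$-boundedness plus \eqref{limit T invariant}, not $A$-ergodicity, and that form is what the paper needs elsewhere. For this statement, where ergodicity is the hypothesis in each direction, your argument is the leaner one.

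Your worry about the shifted indexing of $M_{\exp}$ is unfounded. Only the unit row sums and conditions \eqref{astc}, \eqref{ast2c} for $C$ enter.
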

\begin{proof}
The equivalence between \ref{cuidate1} and \ref{cuidate2} is trivial. Let us see that \ref{cuidate2} implies \ref{cuidate3}. Since $T$ is $\id$-ergodic we have that $\lim_{n\to\infty}T^{n+1}x - T^n x=0$, for every $x\in X$, and \eqref{limit T invariant} is satisfied for the matrix $\id$ for every $x\in X$. By Proposition~\ref{bartleby} and Theorem~\ref{encina}, $T$ is $M_{\exp}$-ergodic.\\
Using the same results, to see that \ref{cuidate3} implies \ref{cuidate2} it is enough to show \eqref{limit T invariant} for the matrix $M_{\exp}$ for every $x\in X$. Since $T$ is $M_{\exp}$-ergodic we have $\lim_n (M_{\exp}T)_{n+1}x- (M_{\exp}T)_{n}x=0\,,$
for every $x\in X$. Observe that 
\[ 
\lim_{n\to\infty}\frac{S(n+1,\exp)}{e \cdot S(n,\exp)}=\lim_{n\to\infty}\frac{e+e^2+\dots e^{n+1}}{e^2+\dots + e^{n+1}}= 1\,. 
\]
Then we have
	\begin{align*}
		0=& \lim_{n\to\infty} \frac{S(n+1,\exp)}{e \cdot S(n,\exp)} (M_{\exp}T)_{n+1}x - (M_{\exp}T)_{n}x \\
		=& \lim_{n\to\infty} \frac{1}{e \cdot S(n,\exp)}  \sum_{k=1}^{n+1} e^k T^{k-1}x - \frac{1}{S(n,\exp)}  \sum_{k=1}^{n} e^k T^{k-1}x \\
		=& \lim_{n\to\infty} \frac{1}{S(n,\exp)}  \sum_{k=0}^{n} e^k T^{k}x - \frac{1}{S(n,\exp)}  \sum_{k=1}^{n} e^k T^{k-1}x \\
		=& \lim_{n\to\infty} \frac{Tx}{S(n,\exp)} + \frac{1}{S(n,\exp)}  \sum_{k=1}^{n} e^k (T-\id)T^{k-1}x \\
		=& \lim_{n\to\infty} (T-\id)(M_{\exp}T)_n x\,
	\end{align*}
for every $x\in X$. This yields the conclusion.
\end{proof}

\subsection{Cesàro boundedness}

We now relate Cesàro boundedness with $ M_{p}$-boundedness. Keeping in mind that $ M_{0}$ is exactly the matrix in \eqref{sagrario}, we need to know when the pairs $ (M_{p}, M_{0})$ and $ (M_{0}, M_{p})$ have property \ac\!.

\begin{prop} \label{musgania}
Let $ p,q \in \mathbb{R}$ satisfy one of the following two conditions
\begin{equation*}
q \leq p \, \text{ or } \,  -1 < p < q \,.
\end{equation*}
Then the pair $ (M_{p},M_{q})$ satisfies \ac\!.
\end{prop}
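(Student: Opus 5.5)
The approach is to compute the connecting matrix explicitly. Since $M_{p}$ is invertible (Remark~\ref{aretino}), the only $C$ with $CM_{p}=M_{q}$ is $C=M_{q}M_{p}^{-1}$. So we must bound the row sums $\sum_i |c_{ni}|$ uniformly in $n$. Write $M_{q}=(m_{nj})$ with $m_{nj}=j^{q}/S(n,q)$ for $j\le n$, and use the bidiagonal form of $D_p=M_p^{-1}$: $d_{jj}=S(j,p)/j^{p}$ and $d_{j(j-1)}=-S(j-1,p)/j^{p}$. Each entry $c_{ni}=\sum_j m_{nj}d_{ji}$ then has at most two terms, namely $j=i$ and $j=i+1$. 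Setting $r=q-p$, this gives
\begin{equation*}
c_{nn}=\frac{n^{r}S(n,p)}{S(n,q)},\qquad c_{ni}=\frac{S(i,p)}{S(n,q)}\big(i^{r}-(i+1)^{r}\big)\quad (1\le i<n),
\end{equation*}
and $c_{ni}=0$ for $i>n$.

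The key step is a summation by parts. Since $S(i,p)-S(i-1,p)=i^{p}$ and $i^{r}i^{p}=i^{q}$, we get
\begin{equation*}
\sum_{i=1}^{n-1}S(i,p)\big(i^{r}-(i+1)^{r}\big)=\sum_{i=1}^{n} i^{r}\big(S(i,p)-S(i-1,p)\big)-n^{r}S(n,p)=S(n,q)-n^{r}S(n,p).
\end{equation*}
The argument then splits according to the sign of $r$.

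If $q\le p$, then $r\le 0$, so $i^{r}$ is nonincreasing and every entry of $C$ is nonnegative. The row sum is then exactly $\big(n^{r}S(n,p)+S(n,q)-n^{r}S(n,p)\big)/S(n,q)=1$, so \eqref{astc} holds with $K_C=1$. (Equivalently, $C$ is itself a probability matrix.) If instead $-1<p<q$, then $r>0$ and the off-diagonal entries are nonpositive. Taking absolute values, the row sum becomes
\begin{equation*}
\sum_{i}|c_{ni}|=\frac{2n^{r}S(n,p)-S(n,q)}{S(n,q)}\le \frac{2n^{q-p}S(n,p)}{S(n,q)} .
\end{equation*}

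What remains, and what I expect to be the only real obstacle, is the uniform bound $n^{q-p}S(n,p)\lesssim S(n,q)$ in the second case. Here the hypothesis $p>-1$ is essential: it forces $q>-1$ as well, and Remark~\ref{heiland} gives $S(n,p)\sim n^{p+1}$ and $S(n,q)\sim n^{q+1}$. Hence $n^{q-p}S(n,p)/S(n,q)\sim n^{q-p}n^{p+1}/n^{q+1}=1$, which is bounded. To make this rigorous with constants, I would compare with integrals, using $p>-1$ and $q>-1$. For $p\ge 0$ the integrand is increasing, so $S(n,p)\le (n+1)^{p+1}/(p+1)$. For $-1<p<0$ it is decreasing, so $S(n,p)\le 1+\int_1^n x^{p}\,dx\le C_p n^{p+1}$. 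In the same way $S(n,q)\ge c_q n^{q+1}$, from the bounds $\int_0^n x^q\,dx$ or $\int_1^{n+1}x^{q}\,dx$ depending on the sign of $q$. Together these yield $K_C<\infty$.

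Finally, this also explains why the hypothesis $p>-1$ cannot be dropped when $p<q$. For $p\le -1<q$ the ratio $n^{q-p}S(n,p)/S(n,q)$ is unbounded, of order $n^{-p-1}$ or $\log n$ by Remark~\ref{heiland}.
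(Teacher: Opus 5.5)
Your proposal is correct and follows the paper's proof essentially step for step. Both compute $C=M_qM_p^{-1}$ explicitly from the bidiagonal inverse and telescope the row sums. This gives row sum $1$ when $q\le p$ and $2n^{q-p}S(n,p)/S(n,q)-1$ when $p<q$, which is bounded for $-1<p<q$ by the asymptotics of Remark~\ref{heiland}. Your integral comparison simply makes that last asymptotic step quantitative.
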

\begin{proof}
We have to see that in each case the matrix $ C=M_{q}M_{p}^{-1}$ satisfies \eqref{astc}. If $ p=q$, this is the identity matrix and it is trivially satisfied. To handle the case $ p \neq q$ we see  what do the elements $ c_{ni}$ of $ C$ look like. We write $ M_{q}=m_{ni}^{q}$ and $ M_{p}^{-1}= d_{ni}^{p}$. Then 
\begin{equation*}
c_{nn} = m_{nn }^{q}d_{nn}^{p} = \frac{n^{q}}{S(n,q)} \cdot \frac{S(n,p)}{n^{p}} = \frac{S(n,p)}{S(n,q)}n^{q-p} \,.
\end{equation*}
If $ i\geq n+1$ then clearly $ c_{ni}=0$, and for $ i \leq n-1$ we have
\begin{equation} \label{adoracion}
\begin{split}
c_{ni} & = \sum_{j=1}^{n}m_{nj}^{q} d_{ji}^{q} = m_{ni}^{q} d_{ii}^{q} + m_{n(i+1)}^{q} d_{(i+1)i}^{q} \\
& = \frac{i^{q}}{S(n,q)} \cdot \frac{S(i,p)}{i^{p}} + \frac{(i+1)^{q}}{S(n,q)} \cdot \frac{-S(i,p)}{(i+1)^{p}} \\
&= \frac{S(i,p)}{S(n,q)} \big(i^{q-p} - (i+1)^{q-p}  \big) \,.
\end{split}
\end{equation}
Suppose now that $ q<p$. Then $ c_{ni} \geq 0$ for every $ n, i$ and, for each fixed $ n \in \mathbb{N}$ we have
\begin{align*}
\sum_{i=1}^{\infty} \vert c_{ni} \vert
& = \sum_{i=1}^{n} c_{ni} \\
& = \frac{1}{S(n,q)} \bigg( \sum_{i=1}^{n-1} S(i,p) \big(i^{q-p} - (i+1)^{q-p}\big) + S(n,p) n^{q-p} \bigg) \\
& = \frac{1}{S(n,q)} \bigg( \sum_{i=1}^{n-1} S(i,p) i^{q-p} - \sum_{i=2}^{n} S(i-1,p) i^{q-p}\big) + S(n,p) n^{q-p} \bigg) \\
& = \frac{1}{S(n,q)} \bigg( \sum_{i=2}^{n} i^{p} i^{q-p} +1  \bigg)
= \frac{S(n,q)}{S(n,q)} = 1\,.
\end{align*}
This completes the proof when $ q \leq p$.\\
Let us suppose now $p <q$ are arbitrary and fix $ n \in \mathbb{N}$; then
\begin{align*}
\sum_{i=1}^{\infty} \vert c_{ni} \vert
& = \frac{1}{S(n,q)} \bigg(\sum_{i=1}^{n-1} \frac{S(i,p)}{S(n,q)} \big( (i+1)^{q-p} - i^{q-p}\big) + \frac{S(n,p)}{S(n,q)} n^{q-p}  \bigg)\\
& =\frac{1}{S(n,q)} \bigg( \sum_{i=2}^{n} S(i-1,p) i^{q-p} - \sum_{i=1}^{n-1} S(i,p)i^{q-p} + S(n,p) n^{q-p} \bigg)\\
& =\frac{1}{S(n,q)} \bigg( S(n-1,p) n^{q-p}-1 + S(n,p) n^{q-p} - \sum_{i=2}^{n-1}  i^{p} i^{q-p} \bigg) \\
&= \frac{1}{S(n,q)} \bigg( 2 S(n-1,p) n^{q-p}+ n^{p}  n^{q-p} -  S(n-1,q) i^{p} i^{q-p} \bigg) \\
&= \frac{2 S(n,p) n^{q-p}-S(n,q)}{S(n,q)} = 2 \, n^{q-p}  \frac{S(n,p)}{S(n,q)} -1 \,.
\end{align*}
Assuming $ -1<p<q$, using Remark~\ref{heiland} immediately shows that \eqref{astc} holds and completes the proof.
\end{proof}

\begin{nota}
Let us note that $ -1$ is actually a sort of a border case, in the sense that if $ p \leq -1$ and $p<q$, then the pair $ (M_{p},M_{q})$ does not satisfy \break\ac\!. In this case Remark~\ref{heiland} gives
\begin{equation*}
\sum_{i=1}^{\infty} \vert c_{ni} \vert
=  2 \, n^{q-p}  \frac{S(n,p)}{S(n,q)} -1
\sim
\begin{cases}
n^{q-p} & \text{ if } p<q<-1 \\
\dfrac{n^{q-p} }{\log n}& \text{ if } p<q=-1 \\
n^{-(p+1)}& \text{ if } p<-1 <q \\
\log n & \text{ if } p=-1<q  
\end{cases}
\end{equation*}
In all cases the right-hand side tends to $ \infty$ as $ n \to \infty$\,.
\end{nota}

As a straightforward consequence of Propositions~\ref{cohen} and~\ref{musgania} we have the following.

\begin{teo} \label{nana}
Let $ X$ be a lcHs and $ T: X \to X$  a continuous, linear operator. Then the following statements are equivalent.
\begin{enumerate}[(i)]
\item $ T $ is Cesàro-bounded.
\item $ T$  is  $ M_{p}$-bounded for some $ p>-1$.
\item $ T$  is  $ M_{p}$-bounded for all $ p>-1$.
\end{enumerate}
\end{teo}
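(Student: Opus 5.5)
The argument is a chain of transfers through the Cesàro matrix $M_0$ (which is exactly \eqref{sagrario}), using Proposition~\ref{musgania} together with Proposition~\ref{schubert}--\ref{madruga}, or equivalently Theorem~\ref{cohen}. Since being Cesàro bounded is exactly being $M_0$-bounded (Remark~\ref{cesaro}), all three statements concern $M_p$-boundedness for various $p$, with $p=0$ included in the range $p>-1$.

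First, (iii) implies (ii) is trivial; for instance take $p=0$, or any fixed $p>-1$.

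For (ii) implies (i), suppose $T$ is $M_p$-bounded for some $p>-1$. We want the pair $(M_p,M_0)$ to satisfy \ac. If $0\le p$, this is the case $q\le p$ of Proposition~\ref{musgania} with $q=0$. If $-1<p<0$, it is the case $-1<p<q$ with $q=0$. In either case Proposition~\ref{schubert}--\ref{madruga} applied to the sequence of powers $(T^n)_n$ shows that $T$ is $M_0$-bounded, i.e.\ Cesàro bounded.

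For (i) implies (iii), suppose $T$ is $M_0$-bounded and fix $p>-1$. We need the pair $(M_0,M_p)$ to have \ac. If $p\le 0$, this is the case $q\le p$ of Proposition~\ref{musgania}, with the roles of the indices being $(M_0,M_p)$, i.e.\ $p$ playing the role of $q\le 0$. If $p>0$, it is the case $-1<0<p$. Again Proposition~\ref{schubert}--\ref{madruga} transfers boundedness, so $T$ is $M_p$-bounded.

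There is no real obstacle here. The only point requiring care is to check that for every $p>-1$ both directions $(M_p,M_0)$ and $(M_0,M_p)$ fall under one of the two admissible cases of Proposition~\ref{musgania}. This is exactly why the threshold $-1$ appears: for $p\le -1$ the pair $(M_p,M_0)$ fails \ac by the subsequent remark. Since $M_p$ is invertible, one could also invoke Theorem~\ref{cohen} to note that the implications are sharp at the level of arbitrary sequences, but that is not needed for the equivalence.
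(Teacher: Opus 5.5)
Your proposal is correct and follows the paper's own argument: the paper obtains Theorem~\ref{nana} directly from Proposition~\ref{musgania}, which gives \ac for both pairs $(M_p,M_0)$ and $(M_0,M_p)$ when $p>-1$, combined with the boundedness transfer in Theorem~\ref{cohen} (that is, Proposition~\ref{schubert}--\ref{madruga}). Your case check, showing that each direction falls under one of the two admissible cases of Proposition~\ref{musgania}, is exactly the verification the paper leaves implicit.
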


\subsection{Mean ergodicity}

To compare ergodicity for these matrices we have to see when the pair $ (M_{p},M_{q})$ satisfies \acc\!.
\begin{prop} \label{cordero}
Let $ p,q \in \mathbb{R}$ be such that $ -1 < p$ and $ -1 \leq q$. Then the pair $ (M_{p},M_{q})$ satisfies \acc\!.
\end{prop}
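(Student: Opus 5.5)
The plan is to verify the two defining conditions of \acc for the matrix $C=M_{q}M_{p}^{-1}$, which is the only candidate because $M_{p}$ is invertible (Remark~\ref{aretino}). The first condition, \eqref{astc}, should come directly from Proposition~\ref{musgania}. The second, \eqref{ast2c}, should follow from the explicit formula \eqref{adoracion} for the entries $c_{ni}$ together with the growth of $S(n,q)$ in Remark~\ref{heiland}.

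\emph{Step 1: condition \eqref{astc}.} I split according to the order of $p$ and $q$.
\begin{itemize}
\item If $q\leq p$, the first alternative of Proposition~\ref{musgania} applies.
\item If $p<q$, then $-1<p<q$, since $p>-1$ by hypothesis, and the second alternative applies.
\end{itemize}
Either way the pair $(M_{p},M_{q})$ satisfies \ac, so $K_{C}<\infty$. The border value $q=-1$ creates no difficulty here: together with $p>-1$ it forces $q<p$, so it falls in the first case.

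\emph{Step 2: condition \eqref{ast2c}.} Fix a column index $i\in\mathbb{N}$. For every $n\geq i+1$, formula \eqref{adoracion} gives
\begin{equation*}
c_{ni}=\frac{S(i,p)\big(i^{q-p}-(i+1)^{q-p}\big)}{S(n,q)} .
\end{equation*}
The numerator depends only on $i$ and $p,q$, not on $n$. Hence $|c_{ni}|=K_{i}/S(n,q)$ with $K_{i}$ independent of $n$, and it suffices to show that $S(n,q)\to\infty$.

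By Remark~\ref{heiland}, this holds precisely when $q\geq -1$:
\begin{itemize}
\item if $q=-1$, then $S(n,-1)\sim\log n$;
\item if $q>-1$, then $S(n,q)\sim n^{q+1}$.
\end{itemize}
Therefore $\lim_{n}c_{ni}=0$ for every fixed $i$, which is \eqref{ast2c}. (The diagonal entry $c_{ii}$ is a single term and does not affect the limit.)

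The only point needing care is the role of the hypotheses. The condition $p>-1$ is used only to reach the case $-1<p<q$ of Proposition~\ref{musgania}. The condition $q\geq-1$ is used only to make $S(n,q)$ diverge; for $q<-1$, $S(n,q)$ stays bounded and the columns would not tend to $0$. I do not expect any genuine obstacle beyond checking that formula \eqref{adoracion} is valid for all $p\neq q$. When $p=q$ the matrix $C$ is the identity, which trivially satisfies both conditions.
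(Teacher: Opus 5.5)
Your proposal is correct and is essentially the paper's own proof. You obtain \ac from Proposition~\ref{musgania}; your case split $q\leq p$ versus $-1<p<q$ just makes explicit why $p>-1$ suffices for every $q$. You then read off \eqref{ast2c} from \eqref{adoracion}: for a fixed column $i$ the numerator $S(i,p)\big(i^{q-p}-(i+1)^{q-p}\big)$ does not depend on $n$, while $S(n,q)\to\infty$ because $q\geq -1$.
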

\begin{proof}
To begin with, by Proposition~\ref{musgania}, the pair satisfies \ac\!. To complete the proof we just have to check that the matrix $ C=M_{q}M_{p}^{-1}$ satisfies \eqref{ast2c}\!. For each fixed $ i_{0} \in \mathbb{N}$ we have (recall \eqref{adoracion})
\begin{equation*}
c_{ni_{0}} =  \frac{1}{S(n,q)} \Big(S(i_{0},p) \big(i_{0}^{q-p} - (i_{0}+1)^{q-p}  \big)\Big) \,.
\end{equation*}
Since $\lim_{n} S(n,q) = \sum_{k=1}^{\infty}  k^{q} = \infty$ and the second factor is constant, we have the result.
\end{proof}

Combining this and Theorem~\ref{encina} we have the following.

\begin{teo} \label{matermea}
Let $ X$ be a reflexive lcHs and $ T: X \to X$  a continuous, linear operator. Then the following statements are equivalent.
\begin{enumerate}[(i)]
\item \label{matermea1} $ T $ is mean ergodic.
\item \label{matermea2} $ T$  is  $ M_{p}$-ergodic for some $ p>-1$.
\item \label{matermea3} $ T$  is  $ M_{p}$-ergodic for all $ p>-1$.
\end{enumerate}
\end{teo}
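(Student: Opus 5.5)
The plan is to prove the chain \ref{matermea1}$\Rightarrow$\ref{matermea3}$\Rightarrow$\ref{matermea2}$\Rightarrow$\ref{matermea1}. Since $M_0=M$ is the Cesàro matrix (Remark~\ref{cesaro}), statement \ref{matermea1} is simply $M_0$-ergodicity, so everything reduces to transferring ergodicity between $M_p$ and $M_q$ for $p,q>-1$. The step \ref{matermea3}$\Rightarrow$\ref{matermea2} is trivial.

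For a general transfer, take $p,q>-1$ and suppose $T$ is $M_p$-ergodic. Because $X$ is reflexive, hence barrelled, the Uniform Boundedness Principle shows that $T$ is $M_p$-bounded. By Proposition~\ref{cordero} the pair $(M_p,M_q)$ satisfies \acc. Theorem~\ref{encina} will then give that $T$ is $M_q$-ergodic, provided $T$ satisfies \eqref{limit T invariant} for the matrix $M_p$ at every $x\in X$.

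So the key step is to show that an $M_p$-ergodic operator satisfies \eqref{limit T invariant} for $M_p$. The second limit in \eqref{limit T invariant} is automatic, because $(M_pT)_n$ commutes with $T$, so the two limits coincide. For the first limit, I would mimic the computation in Proposition~\ref{cuidate}. Ergodicity gives $(M_pT)_{n+1}x-(M_pT)_nx\to0$, and $(M_pT)_nx$ is bounded. Shifting the index in $(M_pT)_{n+1}$, the aim is an identity of the form
\[
(T-\id)(M_pT)_nx \approx \frac{S(n+1,p)}{S(n,p)}(M_pT)_{n+1}x-(M_pT)_nx + R_n x,
\]
where the remainder $R_nx$ involves the weight increments $(k+1)^p-k^p$.

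This remainder is where I expect the main difficulty. Since $(k+1)^p-k^p\approx pk^{p-1}$, it has the form $\frac{1}{S(n,p)}\sum_k((k+1)^p-k^p)T^kx$, which is an average of the $T^kx$ with weights of relative size $1/k$. I would first try to bound it by Abel summation in terms of the means $(M_pT)_jx$, $j\le n$. These means are bounded, and in fact convergent, so the remainder would be of order $\frac{1}{S(n,p)}\sum_j j^{p-1}\,O(1)$. This tends to $0$ when $p>0$, but needs more care for $-1<p\le0$, where $\sum_{j\le n} j^{p-1}$ may stay bounded or grow only like $\log n$ while $S(n,p)\sim n^{p+1}$, so it should still vanish.

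The prefactor ratio $S(n+1,p)/S(n,p)\to1$ by Remark~\ref{heiland}, since $p>-1$. This makes the first term tend to $0$ and, together with the remainder estimate, yields \eqref{limit T invariant}.

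If the Abel-summation route becomes messy, the fallback is Remark~\ref{Deltas2}, which reduces \eqref{limit T invariant} to showing that $T$ is $M_p\Delta$-null, using that the first column of $M_p$ tends to $0$ because $S(n,p)\to\infty$. One would then verify directly that $M_p\Delta$ is expressed via $\Delta M_p$ through a matrix with bounded row sums and null columns, and invoke Proposition~\ref{schubert}.

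Applying the transfer with $p=0$ and $q$ arbitrary gives \ref{matermea1}$\Rightarrow$\ref{matermea3}. Applying it with $q=0$ gives \ref{matermea2}$\Rightarrow$\ref{matermea1}.
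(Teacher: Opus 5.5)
Your proposal is correct. For \ref{matermea1}$\Rightarrow$\ref{matermea3} it is the paper's argument: Proposition~\ref{cordero} and Theorem~\ref{encina} applied to the pair $(M_0,M_p)$.

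For \ref{matermea2}$\Rightarrow$\ref{matermea1} the paper takes a shorter route. It gets Ces\`aro boundedness from Theorem~\ref{nana}. It then uses the same first-term identity you do,
$\frac{S(n+1,p)}{S(n,p)}(M_pT)_{n+1}x-(M_pT)_nx=\frac{(n+1)^p}{S(n,p)}T^nx$,
to deduce $T^nx/n\to0$. This is exactly \eqref{limit T invariant} for $M_0$, so the paper concludes with Corollary~\ref{AE coro} applied to $M_0$ directly. Because the Ces\`aro weights are constant, no remainder term ever appears.

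You instead prove a stronger intermediate fact: every $M_p$-ergodic operator with $p>-1$ satisfies \eqref{limit T invariant} for $M_p$ itself. Both directions then go through the single transfer ``Proposition~\ref{cordero} plus Theorem~\ref{encina}''. This costs you the estimate on $R_nx$. In return you get a symmetric statement ($M_p$-ergodic implies $M_q$-ergodic for all $p>-1$, $q\ge -1$). You also get a lemma that fails for general probability matrices (recall the $-\id$ example).

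The remainder is easier than your Abel-summation plan suggests. Your displayed identity holds up to the factor $(n/(n+1))^p$ and the harmless term $x/S(n,p)$. The first-term computation already gives $T^nx/n\to0$, because $\frac{(n+1)^p}{S(n,p)}\sim\frac{p+1}{n}$. Since $|j^p-(j-1)^p|\le C_p\,j^{p-1}$ for $j\ge2$, $R_nx$ is then dominated by a $j^p$-weighted average of a null sequence. It therefore tends to $0$ because $S(n,p)\to\infty$ for $p>-1$, with no case distinction on the sign of $p$.
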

\begin{proof}
Let us see that~\ref{matermea1} implies~\ref{matermea3}. Fix $p>-1$. Since $X$ is reflexive, $T$ is $M_0$-ergodic and \eqref{limit T invariant} is satisfied for the matrix $M_0$ for every $x\in X$. By Proposition~\ref{cordero} and Theorem~\ref{encina}, $T$ is $M_p$-ergodic.\\	
To see that~\ref{matermea2} implies~\ref{matermea1}, assume $p>-1$ is different from $0$. It is clear that $T$ is $M_p$-bounded and therefore $M_0$-bounded (Theorem~\ref{nana}). Observe that 
\[ 
\lim_{n\to\infty} (M_pT)_{n+1}x - (M_pT)_n x=0 
\]
is satisfied for every $x\in X$ (see \eqref{restes AT}). Now since $\lim_{n}\frac{S(n+1,p)}{S(n,p)}=1$, we have that 
\begin{multline*}
		 0=\lim_{n\to\infty} \frac{S(n+1,p)}{S(n,p)}(M_pT)_{n+1}x - (M_pT)_n x \\
		 =\lim_{n\to\infty} \sum_{i=1}^{n+1} \frac{S(n+1,p)}{S(n,p)} \frac{i^p}{S(n+1,p)} T^{i-1}x  -  \sum_{i=1}^{n} \frac{i^p}{S(n,p)} T^{i-1}x \\
		 = \lim_{n\to\infty} \frac{(n+1)^p}{S(n,p)}T^{n}x\,,
\end{multline*}
for every $x\in X$. Using Remark~\ref{heiland} gives that $\lim_n \frac{T^nx}{n}=0$ for every $x\in X$, which is equivalent to \eqref{limit T invariant} for the matrix $M_0$ for every $x\in X$. Corollary~\ref{AE coro} yields the conclusion. Since \ref{matermea3} clearly implies \ref{matermea2} the proof is completed
\end{proof}

\section{Passing from boundedness to ergodicity}

In the previous section we have related power/Cesàro boundedness and mean ergodicity with boundedness and ergodicity for other matrices. From a more general point of view, this can be seen as finding conditions on a pair $ (A,B)$ so that  every $ A$-bounded operator is $ B$-bounded, or every $ A$-ergodic is $ B$-ergodic.  Another interesting feature is to find pairs that allow to pass from boundedness to ergodicity; more precisely we are now interested in pairs $ (A,B)$ in such a way that $ A$-bounded operators  are $ B$-ergodic. We will say that the pair satisfies the (BE)-property if this holds for every operator on a reflexive space. Theorem~\ref{reflexive A Bergodic} gives conditions on a pair to satisfy this property, namely
\begin{itemize}
	\item $\lim_{n} b_{n0}=0$,
	\item $(A,B)$ satisfies \ac and
	\item  $(A,B\Delta)$ satisfies \accc\!.	
\end{itemize}

Since, as we already noted, our starting point is  power and Cesàro boundedness, and mean ergodicity, we are mostly interested in linking the matrices defining these properties ($ \id$ and $ M_{0}$, recall \eqref{marais} and \eqref{sagrario}) with other matrices. It is a well known fact (see e.g. \cite[Corollary~5.7]{llibre}) that the pair $ (\id, M_{0})$ satisfies the (BE)-property. This can also be now easily deduced from the fact that $M_0\Delta= \left(\frac{1}{n}\delta_{n,i}\right)_{1\leq i\leq n}$ and then, $ (\id, M_0\Delta)$ clearly satisfies \accc\!.	

\begin{prop}\label{id 0 -1}
Every Cesàro bounded operator on a reflexive space is $ M_{-1}$-ergodic.
\end{prop}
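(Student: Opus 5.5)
The plan is to apply Theorem~\ref{reflexive A Bergodic} with $A=M_0$ and $B=M_{-1}$. Indexing from $1$, as the paper does for $M_p$, this means checking three conditions.
\begin{itemize}
\item $\lim_n b_{n1}=0$. The first column of $M_{-1}$ is $1/S(n,-1)$, which tends to $0$ because $S(n,-1)\sim\log n\to\infty$ (Remark~\ref{heiland}).
\item The pair $(M_0,M_{-1})$ satisfies \ac. This is the case $q=-1\le 0=p$ of Proposition~\ref{musgania}.
\item The pair $(M_0,M_{-1}\Delta)$ satisfies \accc. This is the real content of the proof.
\end{itemize}
Once these hold, Theorem~\ref{reflexive A Bergodic} says that every $M_0$-bounded operator, that is, every Cesàro bounded operator (Remark~\ref{cesaro}), on a reflexive space is $M_{-1}$-ergodic.

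For the third condition, $M_0$ is invertible, so the only possible $C$ is $C=M_{-1}\Delta M_0^{-1}$. I will compute it explicitly.

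First, $M_{-1}\Delta$. Its $(n,i)$ entry is $m_{ni}-m_{n(i+1)}$, where $m_{ni}=\frac{1}{iS(n,-1)}$ for $i\le n$. So for $i<n$,
\[
(M_{-1}\Delta)_{ni}=\frac{1}{S(n,-1)}\Big(\frac1i-\frac1{i+1}\Big)=\frac{1}{S(n,-1)\,i(i+1)},
\]
and the diagonal entry is $\frac{1}{nS(n,-1)}$.

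Next, by Remark~\ref{aretino} with $p=0$, $M_0^{-1}$ has diagonal entries $j$ and subdiagonal entries $-(j-1)$; explicitly, column $j$ has $j$ at row $j$ and $-j$ at row $j+1$. Hence
\[
c_{nj}=(M_{-1}\Delta)_{nj}\,j-(M_{-1}\Delta)_{n(j+1)}\,j .
\]
\begin{itemize}
\item For $j+1<n$ this gives $c_{nj}=\frac{j}{S(n,-1)}\Big(\frac1{j(j+1)}-\frac1{(j+1)(j+2)}\Big)=\frac{2}{S(n,-1)(j+1)(j+2)}$.
\item For $j=n-1$ and $j=n$ there are boundary terms: $c_{nn}=\frac{1}{S(n,-1)}$, and $c_{n(n-1)}$ is $O\big(\frac{1}{n^2S(n,-1)}\big)$.
\end{itemize}

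It follows that
\[
\sum_j|c_{nj}|\le\frac{1}{S(n,-1)}\Big(\sum_{j\ge1}\frac{2}{(j+1)(j+2)}+O(1)\Big)=O\Big(\frac{1}{\log n}\Big)\longrightarrow0,
\]
which is exactly \eqref{ast3c}.

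The main obstacle is the bookkeeping in this product: getting the indices and signs of $M_{-1}\Delta$ right, including the boundary entries $j=n-1,n$. What makes the argument work is the telescoping structure. The interior entries of $C$ have absolute values that form a convergent series independent of $n$, with the factor $1/S(n,-1)\to0$ in front, and the one or two boundary entries are each also $O(1/S(n,-1))$.

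As an alternative route that avoids $M_0^{-1}$, one can use that \ac for $(\id,M_0)$ together with \accc for $(M_0,\cdot)$ composes (Corollary, part (iii)). But the direct computation above is the cleanest, and I would do it first, checking the sign of $c_{n(n-1)}$ carefully.
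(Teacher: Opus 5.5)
Your proof follows the paper's: via Theorem~\ref{reflexive A Bergodic} and Proposition~\ref{musgania} you reduce to showing that $C=M_{-1}\Delta M_0^{-1}$ satisfies \eqref{ast3c}, and your entries $c_{nj}=\frac{2}{(j+1)(j+2)S(n,-1)}$ for $j\le n-2$ and $c_{nn}=\frac{1}{S(n,-1)}$ agree with the paper's. One slip: the diagonal entry $(M_{-1}\Delta)_{nn}=\frac{1}{nS(n,-1)}$ does not follow the interior formula, so $c_{n(n-1)}=(n-1)\big(\frac{1}{n(n-1)S(n,-1)}-\frac{1}{nS(n,-1)}\big)=\frac{2-n}{nS(n,-1)}$, which is of order $1/S(n,-1)$ and not $O\big(\frac{1}{n^2S(n,-1)}\big)$; it is still bounded by $\frac{1}{S(n,-1)}$, so your $O(1)$ term absorbs it and $\sum_j|c_{nj}|\to0$ still follows.
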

\begin{proof}
Taking \eqref{marais} and Proposition~\ref{musgania} into account, we just have to see that the pair $(M_0,M_{-1}\Delta)$ satisfies \accc\!. To do that note first that, if we denote $M_{-1}\Delta=(d_{ni})$, we have
\begin{equation*}
d_{ni}= \begin{cases}
\frac{1}{iS(n,-1)}-\frac{1}{(i+1)S(n,-1)} & \text{ if } i<n \\
\frac{1}{nS(n,-1)} & \text{ if } i=n \\
0  & \text{ if } i>n \\
\end{cases}
\end{equation*}
Then, writing  $M_0^{-1}=(m_{ni})$, the  product  $M_{-1}\Delta M_0^{-1}=(c_{ni})$ is given by
\begin{multline*}
c_{ni}  = \sum_{j=1}^{n}d_{nj} m_{ji} = \sum_{j=1}^{n-1} \frac{1}{S(n,-1)}\left(\frac{1}{j}-\frac{1}{j+1}\right)m_{ji} + \frac{m_{ni}}{nS(n,-1)}\\
 = \sum_{j=1}^{n-1} \frac{1}{S(n,-1)}\frac{1}{j(j+1)}m_{ji} + \frac{m_{ni}}{nS(n,-1)} \,.
\end{multline*}
Recall now that $m_{ni}=n\delta_{n,i}-(n-1)\delta_{n-1,i}$ and let us consider different cases
\begin{itemize}
	\item if $i\leq n-2$\, then
		\begin{multline*}
		c_{ni}  = \sum_{j=1}^{n-1} \frac{1}{S(n,-1)}\frac{1}{j(j+1)}m_{ji} = \frac{1}{S(n,-1)}\left( \frac{i}{i(i+1)} - \frac{i}{(i+1)(i+2)} \right) \\
		 = \frac{i}{(i+1)S(n,-1)}\left( \frac{1}{i} - \frac{1}{i+2}\right)= \frac{2}{(i+1)(i+2)S(n,-1)}\,,
		\end{multline*}
		
	\item if $i=n-1$, then 
	\begin{equation*}
	c_{n(n-1)}= \frac{1}{S(n,-1)}\frac{n-1}{(n-1)n}-\frac{n-1}{nS(n,-1)}=\frac{-n+2}{nS(n,-1)} \,,
	\end{equation*}
	
	\item if $i=n$, then $c_{nn}= \frac{n}{nS(n,-1)}=\frac{1}{S(n,-1)}$\,, and
	\item $c_{ni}=0$ for $i>n$.
\end{itemize}
Taking all this into account we finally have
\begin{multline*}
	\lim_{n\to \infty} \sum_{i=1}^\infty|c_{ni}| = \lim_{n\to \infty} \frac{1}{S(n,-1)} \left(\sum_{i=1}^{n-2} \frac{2}{(i+1)(i+2)} + \frac{n-2}{n} + 1\right) \\
	\leq \lim_{n\to \infty} \frac{1}{S(n,-1)} \left(\sum_{i=1}^{n-2} \frac{2}{i^2}  + 2\right)
	\leq \lim_{n\to \infty} \frac{1}{S(n,-1)} \left(2+ 2\frac{\pi^2}{6}\right)=0\,,
\end{multline*}
which gives the conclusion.
\end{proof}

We have, then, that the pairs $ (\id , M_{0})$ and $ (M_{0}, M_{-1})$ have the (BE)-property. We are now interested in finding a probability matrix (say $ M_{g}$) so that both $(\id,M_g)$ and $(M_g,M_0)$ have the (BE)-property, that is, in reflexive spaces every power bounded operator is $ M_{g}$-ergodic, and every $ M_{g}$-bounded is mean ergodic. To construct such a matrix we consider a continuous strictly positive function $f:[1,+\infty[\to \mathbb{R}^+$. Then we define $S(n,f):=\sum_{i=1}^n f(i)$, which is increasing in $n$, and  the matrix
\begin{equation} \label{machaut}
M_{f}:= 
\begin{pmatrix}
1 & 0&0 &0 & \ldots \\
\frac{f(1)}{S(2,f)} &  \frac{f(2)}{S(2,f)}  & 0 & 0 & \ldots \\
\frac{f(1)}{S(3,f)} & \frac{f(2)}{S(3,f)} &  \frac{f(3)}{S(3,f)}  &0 & \ldots \\
\frac{f(1)}{S(4,f)} & \frac{f(2)}{S(4,f)} &  \frac{f(3)}{S(4,f)}  &  \frac{f(4)}{S(4,f)}  & \ldots \\
\vdots & \vdots & \vdots & \vdots & \ddots
\end{pmatrix}
\end{equation}
This is a probability matrix, and a simple computation shows that its inverse is given by 
\begin{equation} \label{12473BF}
M_{f}^{-1} = 
\begin{pmatrix}
\frac{S(1,f)}{f(1)} & 0&0 &0 & \ldots \\
- \frac{S(1,f)}{f(2)} & \frac{S(2,f)}{f(2)}&0 &0 & \ldots \\
0 & - \frac{S(2,f)}{f(3)} & \frac{S(3,f)}{f(3)}&0 & \ldots \\
0&0 & - \frac{S(3,f)}{f(4)} & \frac{S(4,f)}{f(4)}& \ldots \\
\vdots & \vdots & \vdots & \vdots & \ddots
\end{pmatrix}
\end{equation}
If the pair $(M_f,M_0\Delta)$ satisfies \accc (this is one of the conditions giving that  the pair $(M_f,M_0)$ satisfies the (BE)-property), then the sums of the rows (in absolute value) of the matrix $ M_0\Delta M_f^{-1}$ have to tend to $ 0$ (recall \eqref{ast3c}), that is we need 
\begin{equation}\label{lim sf}
\lim_{n\to \infty} \frac{1}{n-1}\frac{S(n-1,f)}{f(n)} + \frac{1}{n}\frac{S(n,f)}{f(n)}=0 \,.
\end{equation}
A way to achieve this is by adjusting the  function $f$ in such a way that the element $\frac{S(n,f)}{f(n)}$ grows as $\sqrt{n}$ (other rates of growth can be considered). This is satisfied by the solutions of the integral equation
\begin{equation*}
 \int_{1}^{y}f(x) \dx = \sqrt{y}f(y) + K\,,
\end{equation*}
which are of the form $f(x)=C\frac{e^{2\sqrt{x}}}{\sqrt{x}}$.
These are natural candidates to be the function defining our matrix.

\begin{nota}
Take  the function  $f(x)=\frac{e^{2\sqrt{x}}}{\sqrt{x}}$, consider  the matrix $ M_{f}$ defined as in \eqref{machaut} and let us see that both $(\id, M_f)$ and $(M_f, M_0)$ have the (BE)-property. \\
To handle the pair $(\id, M_f)$ we note first that  $\lim_{n} \frac{f(1)}{S(n,f)}=0$, and Condition \ac is automatic since $M_f$ is a probability matrix. It is left to see that $(\id,M_{f}\Delta)$ satisfies \accc\!. Since $f$ is increasing we have that in this case \eqref{ast3c} is satisfied if and only if 
\[ 
\lim_{n\to \infty} \frac{f(n)}{S(n,f)}=0\,. 
\]
Observe that, for $n>1$, the following inequalities hold
\begin{equation}\label{ineqg}
 S(n-1,f)\leq \int_1^n f(x)\dx \leq S(n,f)\,, 
\end{equation}
therefore 
\[ 
\lim_{n\to \infty} \frac{f(n)}{S(n,f))}
\leq \lim_{n\to \infty} \frac{f(n)}{\int_1^n f(x)\dx}
= \lim_{n\to \infty} \frac{e^{2\sqrt{n}}}{\sqrt{n} (e^{2\sqrt{n}}-e^2)}= 0\,, 
\]
and  $(\id, M_f)$ has the (BE)-property.\\
To see that also de the pair $(M_f, M_0)$ has the property we have to see first that it satisfies \ac\!. We consider the matrix $ M_{0} M_{f}^{-1} = C = (c_{in})_{in}$, and note that $c_{ni}=0$ for $i>n$. For the remaining cases we recall \eqref{12473BF} and see that 
\[ 
c_{nn}= \frac{S(n,f)}{nf(n)} \,,
\]
and 
\[ 
c_{ni}= \frac{S(i,f)}{nf(i)}- \frac{S(i,f)}{nf(i+1)} \,,
\]
for $i\leq n-1$. Taking all these into account and the  fact that $ f$ is increasing,  we have, for each fixed $ n \in \mathbb{N}$
\begin{multline*}
\sum_{i=1}^\infty|c_{ni}|= \sum_{i=1}^n c_{ni}=  \frac{S(n,f)}{nf(n)} + \sum_{i=1}^{n-1} \bigg( \frac{S(i,f)}{nf(i)}- \frac{S(i,f)}{nf(i+1)} \bigg) \\
= \frac{S(n,f)}{nf(n)} + \sum_{i=1}^{n-1} \frac{S(i,f)}{nf(i)} - \sum_{i=2}^{n} \frac{S(i-1,f)}{nf(i)}\\
=\frac{1}{n} \bigg( \frac{S(1,f)}{f(1)}  +\sum_{i=2}^{n} \frac{f(i)}{f(i)}  \bigg)=1\,,
\end{multline*}
and the claim follows. \\
It remains to see that $(M_f,M_0\Delta)$ satisfies \accc\!. Observe that the limit in \eqref{lim sf} holds for $f$ using \eqref{ineqg}
\begin{multline*}
\lim_{n\to \infty} \frac{1}{n-1}\frac{S(n-1,f)}{f(n)} + \frac{1}{n}\frac{S(n,f)}{f(n)}\leq \lim_{n\to \infty} \frac{2}{n-1}\frac{S(n,f)}{f(n)} \\
 \leq  \lim_{n\to \infty} \frac{2}{n-1}\frac{\int_{1}^{n+1}f(x)\dx}{f(n)} =
\lim_{n\to \infty} \frac{e^{2\sqrt{n+1}}-e^2}{n-1}\frac{2\sqrt{n}}{e^{2\sqrt{n}}}\\ 
\leq \lim_{n\to \infty} \frac{2\sqrt{n}}{n-1}e^{2(\sqrt{n+1}-\sqrt{n})}=
\lim_{n\to \infty} \frac{2\sqrt{n}}{n-1}e^{\frac{2}{\sqrt{n+1}+\sqrt{n}}}=0\,.
\end{multline*}
This gives that $(M_f,M_0\Delta)$ satisfies \accc and, then $(M_f, M_0)$ has the (BE)-property.
\end{nota}

%

\begin{thebibliography}{10}

\bibitem{ABR}
A.~A. Albanese, J.~Bonet, and W.~J. Ricker.
\newblock Mean ergodic operators in {F}r\'echet spaces.
\newblock {\em Ann. Acad. Sci. Fenn. Math.}, 34(2):401--436, 2009.

\bibitem{AlbaneseBonetRicker15}
A.~A. Albanese, J.~Bonet, and W.~J. Ricker.
\newblock Spectrum and compactness of the {C}es\`aro operator on weighted
  {$\ell_p$} spaces.
\newblock {\em J. Aust. Math. Soc.}, 99(3):287--314, 2015.

\bibitem{BoPaRi11}
J.~Bonet, B.~de~Pagter, and W.~J. Ricker.
\newblock Mean ergodic operators and reflexive {F}r\'echet lattices.
\newblock {\em Proc. Roy. Soc. Edinburgh Sect. A}, 141(5):897--920, 2011.

\bibitem{llibre}
J.~Bonet, D.~Jornet, and P.~Sevilla-Peris.
\newblock {\em Function spaces and operators between them}, volume~11 of {\em
  RSME Springer Series}.
\newblock Springer, Cham, [2023] \copyright 2023.

\bibitem{boos}
J.~Boos.
\newblock {\em Classical and modern methods in summability}.
\newblock Oxford Mathematical Monographs. Oxford University Press, Oxford,
  2000.
\newblock Assisted by Peter Cass, Oxford Science Publications.

\bibitem{cooke_65}
R.~G. Cooke.
\newblock {\em Infinite matrices and sequence spaces}.
\newblock Dover Publications, Inc., New York, 1965.

\bibitem{Diestel}
J.~Diestel.
\newblock {\em Sequences and series in {B}anach spaces}, volume~92 of {\em
  Graduate Texts in Mathematics}.
\newblock Springer-Verlag, New York, 1984.

\bibitem{dunfordschwartz_88}
N.~Dunford and J.~T. Schwartz.
\newblock {\em Linear operators. {P}art {I}}.
\newblock Wiley Classics Library. John Wiley \& Sons, Inc., New York, 1988.
\newblock General theory, With the assistance of William G. Bade and Robert G.
  Bartle, Reprint of the 1958 original, A Wiley-Interscience Publication.

\bibitem{hardy_49}
G.~H. Hardy.
\newblock {\em Divergent {S}eries}.
\newblock Oxford, at the Clarendon Press,, 1949.

\bibitem{K2}
G.~K\"othe.
\newblock {\em Topological vector spaces. {II}}, volume 237 of {\em Grundlehren
  der Mathematischen Wissenschaften}.
\newblock Springer-Verlag, New York-Berlin, 1979.

\bibitem{krengel}
U.~Krengel.
\newblock {\em Ergodic theorems}, volume~6 of {\em De Gruyter Studies in
  Mathematics}.
\newblock Walter de Gruyter \& Co., Berlin, 1985.
\newblock With a supplement by Antoine Brunel.

\bibitem{Reade}
J.~B. Reade.
\newblock On the spectrum of the {C}es\`aro operator.
\newblock {\em Bull. London Math. Soc.}, 17(3):263--267, 1985.

\bibitem{Stieglitz}
M.~Stieglitz and H.~Tietz.
\newblock Matrixtransformationen von {F}olgenr\"aumen. {E}ine
  {E}rgebnis\"ubersicht.
\newblock {\em Math. Z.}, 154(1):1--16, 1977.

\bibitem{Taylor}
A.~E. Taylor.
\newblock {\em Introduction to functional analysis}.
\newblock John Wiley \& Sons, Inc., New York; Chapman \& Hall, Ltd., London,
  1958.

\bibitem{Toeplitz1911}
O.~Toeplitz.
\newblock Über die lineare mittelbildungen.
\newblock {\em Prace Matematyczno-Fizyczne}, 22:113--119, 1911.

\bibitem{yosida1938}
K.~{Yosida}.
\newblock {Mean ergodic theorem in Banach spaces}.
\newblock {\em {Proc. Imp. Acad. Japan}}, 14:292--294, 1938.

\end{thebibliography}

\end{document}